\newtheorem{theorem}{Theorem}[section]
\newtheorem{proposition}[theorem]{Proposition}
\newtheorem{corollary}[theorem]{Corollary}
\newtheorem{definition}[theorem]{Definition}
\newtheorem{lemma}{Lemma}[section]
\declaretheorem[style=remark,qed=$\Diamond$,Refname={Example,Examples},sibling=theorem]{example}
\declaretheorem[style=remark,qed=$\Diamond$,Refname={Remark,Remarks},sibling=theorem]{remark}
\newcommand{\setto}{\rightrightarrows} %Set valued map.
\newcommand{\Hilbert}{\mathcal{H}}
\newcommand{\Id}{\operatorname{Id}}
\newcommand{\Fix}{\operatorname{Fix}}
\newcommand{\dist}{\operatorname{d}}
\newcommand{\graph}{\operatorname{gra}}
\begin{document}

\title{Convergence Rates for Boundedly Regular Systems}
\author{Ern\"o Robert Csetnek\thanks{Faculty of Mathematics,
                                      University of Vienna,
                                      Oskar-Morgenstern-Platz 1,
                                      1090 Vienna, \textsc{Austria}.
                                      \mbox{Email:~\href{mailto:ernoe.robert.csetnek@univie.ac.at}
                                           {ernoe.robert.csetnek@univie.ac.at}}}
           \and
         Andrew Eberhard\thanks{Mathematical Sciences,
                                RMIT University,
                                124 La Trobe Street,
                                Melbourne VIC 3000, \textsc{Australia}.
                                Email~\href{mailto:andy.eberhard@rmit.edu.au}
                                           {andy.eberhard@rmit.edu.au}}
           \and
         Matthew K. Tam\thanks{School of Mathematics \& Statistics,
	   	              	       The University of Melbourne,
	   	              	       Parkville VIC 3010, \textsc{Australia}.
	   	              	       Email:~\href{mailto:matthew.tam@unimelb.edu.au}
	   	              	                   {matthew.tam@unimelb.edu.au}}
	   }

\maketitle	

\begin{abstract}
In this work, we consider a continuous dynamical system associated with the fixed point set of a nonexpansive operator which was originally studied by Bo\c{t} \& Csetnek (2015). Our main results establish convergence rates for the system's trajectories when the nonexpansive operator satisfies an additional regularity property. This setting is the natural continuous-time analogue to discrete-time results obtained in Bauschke, Noll \& Phan (2015) and Borwein, Li \& Tam (2017) by using the same regularity properties.
\end{abstract}

\paragraph{Keywords.} nonexpansive operator $\cdot$ bounded regularity $\cdot$ continuous dynamical systems
\paragraph{Mathematics Subject Classification (MSC2010).} 34G25 $\cdot$ %Evolution inclusions
                     47J25 $\cdot$ %Iterative procedures involving nonlinear operators
                     90C25         %Convex programming

\section{Introduction}
Let $\Hilbert$ denote a real Hilbert space with inner-product $\langle\cdot,\cdot\rangle$ and induced norm $\|\cdot\|$. In this work, we consider the continuous-time dynamical system with initial point $x_0\in\Hilbert$ given by
  \begin{equation}\label{eq:system}
   \dot{x}(t) = \lambda(t)\left(T(x(t))-x(t)\right),\quad x(0)=x_0,
  \end{equation} 
where $T\colon\Hilbert\to\Hilbert$ is nonexpansive and $\lambda\colon[0,+\infty)\to[0,1]$ is Lebesgue measurable. We remark that the parameter function $\lambda$ has an interpretation as a time-scaling factor, through which \eqref{eq:system} can be shown equivalent to the case with $\lambda(t)=1$ for all $t\geq 0$. For details, see \cite[Section~4]{BS2015}.

We shall investigate the behaviour of trajectories of \eqref{eq:system} which are understood in the sense of \emph{strong global solutions}.
\begin{definition}[Strong global solution]\label{d:strong global soln}
A trajectory $x\colon[0,+\infty)\to\Hilbert$ is a \emph{strong global solution} of \eqref{eq:system} if the following properties are satisfied:
\begin{enumerate}[(i)]
 \item $x$ is absolutely continuous on each interval $[0,b]$ for $0<b<+\infty$.
 \item $\dot{x}(t)=\lambda(t)\bigl( T(x(t))-x(t) \bigr)$ for almost all $t\in[0,+\infty)$.
 \item $x(0)=x_0$.
\end{enumerate}
\end{definition}  

Here, absolute continuity of the trajectory $x$ on $[0,b]$ is understood in the vector-valued sense (see, for instance, \cite[Definition~2.1]{AS}) which implies 
  $$ x(t) = x(0) + \int_0^t\dot{x}(s)\,ds\quad\forall t\in[0,b]. $$  
The existence and uniqueness of a strong global solution for each $x_0\in\Hilbert$ follows as a consequence of the \emph{Cauchy--Lipschitz theorem}. The detailed argument can be found in \cite[Section~2]{BS2015}.

Convergence of these trajectories (without rates) was established by Bo\c{t} \& Csetnek \cite{BS2015}.  
\begin{theorem}[{\cite[Theorem~6]{BS2015}}]\label{th:botcsetnek}
Suppose $T\colon\Hilbert\to\Hilbert$ is nonexpansive with $\Fix T\neq\emptyset$ and $\lambda\colon[0,+\infty)\to[0,1]$ be Lebesgue measurable with either
 $$ \int_0^{+\infty}\lambda(t)\bigl( 1-\lambda(t) \bigr)\,dt = +\infty\text{~~or~~}\inf_{t\geq 0}\lambda(t)>0. $$
 Let $x$ denote the unique strong global solution of \eqref{eq:system}. Then the following assertions hold.
  \begin{enumerate}[(i)]
    \item The trajectory $x$ is bounded and $\int_0^{+\infty}\|\dot{x}(t)\|^2dt<+\infty$.
    \item $\lim_{t\to+\infty}\left( T(x(t))-x(t)\right) =0. $
    \item $\lim_{t\to+\infty}\dot{x}(t)= 0$.
    \item $x(t)$ converges weakly to a point $\bar{x}\in\Fix T$ as $t\to+\infty$.
  \end{enumerate}
\end{theorem}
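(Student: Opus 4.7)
The plan is to establish Fej\'er-type monotonicity with respect to $\Fix T$ and then leverage a decay estimate for the displacement $T(x(t))-x(t)$. Fix $p\in\Fix T$ and set $\phi(t):=\|x(t)-p\|^2$. The nonexpansiveness inequality $\|T(x)-p\|^2\leq\|x-p\|^2$, expanded about $x$, yields the standard bound $\langle T(x)-x,x-p\rangle\leq-\tfrac{1}{2}\|T(x)-x\|^2$. Evaluating $\dot\phi$ along the trajectory via \eqref{eq:system} then gives
$$\dot\phi(t)=2\lambda(t)\langle T(x(t))-x(t),\,x(t)-p\rangle\leq-\lambda(t)\|T(x(t))-x(t)\|^2\leq 0.$$
Hence $\phi$ is non-increasing, so $x$ is bounded and $\lim_{t\to+\infty}\|x(t)-p\|$ exists for every $p\in\Fix T$. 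Integrating also yields $\int_0^{+\infty}\lambda(t)\|T(x(t))-x(t)\|^2 dt\leq\|x_0-p\|^2$, and since $\lambda\in[0,1]$ this in turn bounds $\int_0^{+\infty}\|\dot x(t)\|^2 dt$, settling part (i).

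For part (ii), I would show that $e(t):=T(x(t))-x(t)$ has non-increasing squared norm. Nonexpansiveness of $T$ is equivalent to $\tfrac{1}{2}$-cocoercivity of $\Id-T$, which gives
$$\langle e(s)-e(t),\,x(t)-x(s)\rangle\geq\tfrac{1}{2}\|e(t)-e(s)\|^2\quad\text{for all }s,t\geq 0.$$
Substituting $x(t)-x(s)=\int_s^t\lambda(u)e(u)du$, taking $t=s+h\to s^+$ and dividing by $h^2$, Lebesgue differentiation together with a.e.\ differentiability of the absolutely continuous map $e$ yields $-\lambda(s)\langle\dot e(s),e(s)\rangle\geq\tfrac{1}{2}\|\dot e(s)\|^2$ almost everywhere, whence $\tfrac{d}{ds}\|e(s)\|^2\leq 0$. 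Thus $\|e(t)\|^2\downarrow L$ for some $L\geq 0$, and a short case analysis rules out $L>0$: if $L>0$ then $\|e(t)\|^2\geq L/2$ for all large $t$, which combined with $\int_0^{+\infty}\lambda(t)\|e(t)\|^2 dt<+\infty$ forces $\int_0^{+\infty}\lambda(t)dt<+\infty$, contradicting either $\inf_{t\geq 0}\lambda(t)>0$ or $\int_0^{+\infty}\lambda(t)(1-\lambda(t))dt=+\infty$ (via $\lambda(1-\lambda)\leq\lambda$). Part (iii) follows immediately from $\|\dot x(t)\|\leq\|e(t)\|\to 0$.

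For part (iv), I would apply Opial's lemma. The monotonicity established in the first paragraph guarantees existence of $\lim_{t\to+\infty}\|x(t)-p\|$ for every $p\in\Fix T$. If $x(t_n)\wto y$ along some sequence $t_n\to+\infty$, then $e(t_n)\to 0$ by (ii), and the demi-closedness at zero of $\Id-T$ (a classical consequence of nonexpansiveness) forces $y\in\Fix T$. Opial's lemma then yields weak convergence of the full net $x(t)$ to a single $\bar x\in\Fix T$. The step I expect to be hardest is the rigorous justification of the monotonicity of $\|e(t)\|^2$: since $T$ need not be differentiable, $e$ is only absolutely continuous as a vector-valued map, so passage to the limit in the cocoercivity inequality requires care with a.e.\ differentiability in Hilbert space and with Lebesgue differentiation applied to $u\mapsto\lambda(u)e(u)$.
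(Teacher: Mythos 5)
Your argument is correct, but note that the paper itself does not prove this statement: Theorem~\ref{th:botcsetnek} is quoted verbatim from Bo\c{t}--Csetnek \cite{BS2015}, so there is no in-paper proof to match against. What you have written is essentially a self-contained reconstruction of the original argument: Fej\'er monotonicity of $t\mapsto\|x(t)-p\|^2$ for $p\in\Fix T$, an integrability estimate for the residual, monotonicity of $t\mapsto\|T(x(t))-x(t)\|$, and then Opial's lemma combined with demiclosedness of $\Id-T$ at zero. Your Lyapunov bound $\frac{d}{dt}\|x(t)-p\|^2\leq-\lambda(t)\|T(x(t))-x(t)\|^2$ is in fact identical to the paper's Lemma~\ref{l:ineqs}\eqref{l:ineqs:xbar}, since $-\lambda(1-\lambda)\|e\|^2-\|\dot x\|^2=-\lambda\|e\|^2$ along the trajectory, and it neatly covers both hypotheses on $\lambda$ in one stroke (if $\|e(t)\|^2\downarrow L>0$ then $\int_0^{+\infty}\lambda<+\infty$, contradicting either assumption via $\lambda(1-\lambda)\leq\lambda$).

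The one step you rightly flag as delicate -- monotonicity of $\|e(t)\|^2$ -- is sound as sketched: $e=T\circ x-x$ is absolutely continuous on compacts (Lipschitz composed with absolutely continuous), hence a.e.\ differentiable since $\Hilbert$ has the Radon--Nikod\'ym property, and at points that are simultaneously differentiability points of $e$ and Lebesgue points of $u\mapsto\lambda(u)e(u)$ the difference-quotient limit in the cocoercivity inequality is justified. Two small remarks: (a) when $\lambda(s)=0$ your limiting inequality reads $0\geq\frac12\|\dot e(s)\|^2$, which forces $\dot e(s)=0$, so $\frac{d}{ds}\|e(s)\|^2\leq0$ still holds a.e.\ -- worth stating explicitly; (b) a slightly lighter route to the same monotonicity, closer to the original \cite{BS2015} computation, is to divide the nonexpansiveness inequality $\|T(x(t))-T(x(s))\|\leq\|x(t)-x(s)\|$ by $|t-s|$ to get $\|\frac{d}{dt}T(x(t))\|\leq\|\dot x(t)\|$ a.e., whence $\frac{d}{dt}\|e(t)\|^2\leq2\|\dot x(t)\|\,\|e(t)\|-2\lambda(t)\|e(t)\|^2\leq0$; this avoids the simultaneous double limit over Lebesgue points. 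Neither remark affects correctness.
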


The dynamical system \eqref{eq:system} can be viewed as a continuous-time analogue to the discrete-time system given by
  \begin{equation}\label{eq:discrete system}
   x_{k+1} = (1-\lambda_k)x_k+\lambda_k T(x_k).
  \end{equation}    
More precisely, the sequence $(x_k)$ in \eqref{eq:discrete system} can be viewed as a discretisation of the trajectory $x(t)$ in \eqref{eq:system} along unit stepsizes. In other words, for $k\in\mathbb{N}$, we take $\lambda_k\approx\lambda(k)$ and $x_k\approx x(k)$ together with the forward discretisation $\dot{x}(k)\approx x_{k+1}-x_k$. In the literature, the discrete system \eqref{eq:discrete system} is well-known as the \emph{Krasnoselskii--Mann} iteration \cite{BCP2019} corresponding to $T$. By choosing the operator $T$ appropriately, many iterative algorithms can be understood within this framework (see, for instance, \cite[Section~26]{BauCom2nd}).

In analogue with Theorem~\ref{th:botcsetnek},  it can be shown that the sequence $(x_k)_{k\in\mathbb{N}}$ generated by \eqref{eq:discrete system} converges weakly to a point in $\Fix T$ provided that $(\lambda_k)$ satisfies $\sum_{k=1}^\infty\lambda_k(1-\lambda_k)=+\infty$ \cite[Theorem~5.15]{BauCom2nd}. Furthermore, when $T$ satisfies appropriate regularity conditions, information about the rate of convergence of $(x_k)$ can also be provided -- it converges $R$-linearly when $T$ is \emph{boundedly linearly regular}, and sublinearly when $T$ is \emph{boundedly H\"older regular}. Although we defer formally defining these regularity notions until Section~\ref{s:boundedly regular ops}, we will nevertheless state the following result for completeness.

\begin{theorem}\label{th:discrete}
Let $T\colon\Hilbert\to\Hilbert$ be an nonexpansive operator with $\Fix T\neq\emptyset$. 
Let $x_0\in\Hilbert$ and consider the sequence $(x_k)$ given by \eqref{eq:discrete system} with $(\lambda_k)\subseteq [0,1]$ such that $\inf_{k\in\mathbb{N}}\lambda_k(1-\lambda_k)>0$. Then there exists a point $\overline{x}\in\Fix T$ such that the following assertions hold.
\begin{enumerate}[(i)]
\item If $T$ is boundedly linearly regular, then $x_k\to\bar{x}$ with at least $R$-linear rate, that is, with at least rate $O(r^k)$ for some $r\in[0,1)$.
\item If $T$ is boundedly H\"older regular, then $x_k\to\bar{x}$ with at least rate $O(k^{-\rho})$ for some $\rho>0$.
\end{enumerate}
\end{theorem}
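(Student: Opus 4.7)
The plan is to follow the standard Krasnoselskii--Mann analysis augmented by the regularity hypothesis. First I would establish Fejér monotonicity of $(x_k)$ with respect to $\Fix T$: expanding the convex combination and using nonexpansiveness of $T$, one obtains, for every $p\in\Fix T$,
\begin{equation*}
\|x_{k+1}-p\|^2 \leq \|x_k-p\|^2 - \lambda_k(1-\lambda_k)\|T(x_k)-x_k\|^2.
\end{equation*}
Taking infimum over $p\in\Fix T$ on the right-hand side yields the recursion
\begin{equation*}
\dist(x_{k+1},\Fix T)^2 \leq \dist(x_k,\Fix T)^2 - c\,\|T(x_k)-x_k\|^2,
\quad c:=\inf_k \lambda_k(1-\lambda_k)>0.
\end{equation*}
In particular, $(x_k)$ is bounded, say contained in a bounded set $B\supseteq\Fix T$.

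Next, I would feed the regularity hypothesis into this recursion. In case (i), bounded linear regularity supplies a constant $\kappa>0$ with $\dist(x,\Fix T)\leq \kappa\|Tx-x\|$ for every $x\in B$, so the recursion becomes $\dist(x_{k+1},\Fix T)^2 \leq (1-c\kappa^{-2})\dist(x_k,\Fix T)^2$, giving $\dist(x_k,\Fix T)=O(r^{k/2})$ for some $r\in[0,1)$. In case (ii), bounded Hölder regularity yields $\dist(x,\Fix T)\leq \kappa\|Tx-x\|^\alpha$ on $B$ for some $\alpha\in(0,1]$, so setting $a_k:=\dist(x_k,\Fix T)^2$ we obtain a nonlinear recursion of the form $a_{k+1}\leq a_k - c'\,a_k^{1/\alpha}$. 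A standard elementary lemma on recurrences of this shape (comparison with the ODE $\dot a = -c' a^{1/\alpha}$, or induction) produces $a_k=O(k^{-\alpha/(1-\alpha)})$ when $\alpha<1$, and hence $\dist(x_k,\Fix T)=O(k^{-\rho})$ for an explicit $\rho>0$.

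Having quantified the decay of $\dist(x_k,\Fix T)$, I would promote it to convergence of the iterates themselves by exploiting Fejér monotonicity a second time. Letting $p_k\in\Fix T$ denote the metric projection of $x_k$, monotonicity with respect to the fixed point $p_k$ gives, for every $\ell\ge k$,
\begin{equation*}
\|x_\ell-x_k\|\leq \|x_\ell-p_k\|+\|x_k-p_k\| \leq 2\dist(x_k,\Fix T).
\end{equation*}
Thus $(x_k)$ is Cauchy and converges strongly to some $\bar x\in\Fix T$ (the limit lies in $\Fix T$ because $\dist(x_k,\Fix T)\to 0$ and $\Fix T$ is closed). Passing to the limit in $\ell$ yields $\|x_k-\bar x\|\leq 2\dist(x_k,\Fix T)$, so $x_k$ inherits the same rate: $R$-linear in case (i) and $O(k^{-\rho})$ in case (ii).

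The routine pieces are the Fejér inequality and the Cauchy argument; the only real obstacle is the analysis of the Hölder recursion $a_{k+1}\leq a_k - c'a_k^{1/\alpha}$, where one must be careful about whether $a_k$ is bounded below by a quantity that keeps the decrement meaningful. I expect this to be handled by a now-classical comparison argument (as in Borwein--Li--Tam), which gives the polynomial bound without requiring strict positivity of $a_k$.
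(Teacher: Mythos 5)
Your argument is correct, but it is not the route the paper takes: the paper proves this theorem purely by citation, pointing to \cite[Theorem~6.1]{BNP2015} for part (i) and \cite[Corollary~3.9]{BLT2017} for part (ii), whereas you give a self-contained proof. What you have written is essentially the discrete-time counterpart of the paper's own continuous-time analysis in Section~\ref{s:convergence}: your Fej\'er inequality with the $\lambda_k(1-\lambda_k)\|T(x_k)-x_k\|^2$ decrement is the discrete analogue of Lemma~\ref{l:ineqs}, your geometric decay of $\dist^2(x_k,\Fix T)$ plays the role of Gr\"onwall's inequality, your recursion $a_{k+1}\le a_k-c'a_k^{1/\alpha}$ is the discrete analogue of the Bihari--LaSalle estimate (Lemma~\ref{l:bls}), and your final step $\|x_k-\bar x\|\le 2\dist(x_k,\Fix T)$ is exactly the device used to obtain \eqref{eq:key2} in Theorem~\ref{th:linear reg convergence} (indeed, in the discrete setting it even upgrades weak to strong convergence, as you note). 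Two minor points to tidy up if you write this out in full: in case (i) you should remark that $\kappa$ may be enlarged so that $r:=1-c\kappa^{-2}\in[0,1)$ (otherwise the recursion forces $\dist(x_{k+1},\Fix T)=0$ and the sequence terminates in $\Fix T$, which is harmless), and in case (ii) the elementary lemma on the recursion $a_{k+1}\le a_k-c'a_k^{1/\alpha}$ should be stated or cited precisely (it is indeed classical, e.g.\ in \cite{BLT2017} or Polyak's book), noting that $(a_k)$ is nonincreasing so no lower bound on $a_k$ is needed. The trade-off is clear: the paper's citation is shorter and inherits the sharper constants of the quoted results, while your proof is elementary, self-contained, and makes the discrete/continuous parallel that motivates the paper completely explicit.
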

\begin{proof}
(i):~See \cite[Theorem~6.1]{BNP2015}. (ii):~See \cite[Corollary~3.9]{BLT2017}. For generalisations, see \cite{LNT}.
\end{proof}

In this work, we show that the analogues statements about convergence rates given in Theorem~\ref{th:discrete} also hold in the continuous-time setting. From the perspective of iterative algorithms in optimisation, understanding the interplay between the corresponding discrete and continuous-time systems provides insight into the conditions required convergence as well as a technology for derive new schemes. For specific examples, see \cite{CMT,RT}. For other recent works which study the interplay between discrete and continuous-time systems, the reader is referred to \cite{RYY,PS,abbas2014newton,attouch2018convergence,banert2018forward}.

\bigskip

The remainder of this work is structured as follows. In Section~\ref{s:boundedly regular ops}, we review notions of bounded regularity for operators. These notions are then used in Section~\ref{s:convergence} to prove convergence rates for the strong global trajectories of \eqref{eq:system}. Closure properties of the classes of boundedly regular operators are studied in Section~\ref{s:closure properties}. These properties are of interest in their own right and complement the results in \cite{CRZ2018}. Finally, Section~\ref{s:convergence2} uses these closure properties to deduce several extensions of the results from Section~\ref{s:convergence}.

\section{Boundedly Regular Operators}\label{s:boundedly regular ops}
In this section, we recall two notions of boundedly regular operators as well as providing examples of each. These notions are a kind of \emph{error bound} in that, when satisfied, that they bound the distance to the fixed point set of an operator in terms of its residual.

 The first notion, based on linear regularity, was proposed for projection operators by Bauschke \& Borwein \cite{BB96} and for the general case by Bauschke, Noll \& Phan \cite{BNP2015}. 
\begin{definition}[Linearly regular operators]\label{d:lin reg ops}
An operator $T\colon\Hilbert\to\Hilbert$ is \emph{linearly regular} on $U\subseteq\Hilbert$ if there exists a constant $\kappa>0$ such that
  $$ \dist(y,\Fix T)\leq\kappa \|y-T(y)\|\quad\forall y\in U. $$
If $T$ is linearly regular on every bounded subset of $\Hilbert$, it is said to be \emph{boundedly linearly regular}.
\end{definition}

Recall that a set is \emph{polyhedral} if it can be expressed as the intersection of finitely many closed half-spaces and/or hyperplanes, and that an operator is \emph{polyhedral} if its graph is the union of finitely many polyhedral sets. For remarks on this terminology, see \cite[p.~76]{RW2009}.
\begin{proposition}\label{prop:poly}
Let $\Hilbert=\mathbb{R}^n$. If $T\colon\Hilbert\to\Hilbert$ is polyhedral with $\Fix T\neq\emptyset$, then $T$ is boundedly linearly regular.
\end{proposition}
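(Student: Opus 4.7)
The plan is to decompose the graph of $T$ into its polyhedral pieces and apply Hoffman's classical error bound on each. Write $\graph T = \bigcup_{i=1}^{m} P_i$ with each $P_i \subseteq \mathbb{R}^n \times \mathbb{R}^n$ polyhedral, and set
\[
  F_i := \{u \in \mathbb{R}^n : (u,u) \in P_i\},
\]
which is polyhedral as the first-coordinate projection of $P_i \cap \Delta$, where $\Delta := \{(u,u) : u \in \mathbb{R}^n\}$ denotes the diagonal. Since every $(y, T(y)) \in \graph T$ lies in some $P_i$ and $\Fix T = \bigcup_{i=1}^{m} F_i$, it suffices to handle the cases $F_i \neq \emptyset$ and $F_i = \emptyset$ separately. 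Let $I_+ := \{i : F_i \neq \emptyset\}$ and $I_0 := \{i : F_i = \emptyset\}$; note that $I_+ \neq \emptyset$ thanks to $\Fix T \neq \emptyset$.

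For $i \in I_+$, represent $P_i = \{(u,v) : C_i u + D_i v \leq b_i\}$, so that $F_i = \{u : (C_i + D_i) u \leq b_i\}$ is a nonempty polyhedron. Hoffman's lemma then produces $\kappa_i > 0$ with
\[
  \dist(y, F_i) \leq \kappa_i \bigl\| \bigl((C_i + D_i)y - b_i\bigr)_{+} \bigr\|
  \quad \forall y \in \mathbb{R}^n.
\]
When $(y, T(y)) \in P_i$, the inequality $C_i y + D_i T(y) \leq b_i$ rearranges to the componentwise bound $(C_i + D_i)y - b_i \leq D_i(y - T(y))$; taking positive parts and operator norms yields $\dist(y, \Fix T) \leq \dist(y, F_i) \leq \tilde{\kappa}_i \|y - T(y)\|$ with $\tilde{\kappa}_i := \kappa_i \|D_i\|$.

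For $i \in I_0$, the polyhedra $P_i$ and $\Delta$ are disjoint. The Minkowski difference $P_i - \Delta$ is again a closed polyhedron, does not contain the origin, and satisfies $\dist(P_i, \Delta) = \dist(0, P_i - \Delta) =: \delta_i > 0$. The elementary identity $\dist\bigl((y, T(y)), \Delta\bigr) = \|y - T(y)\|/\sqrt{2}$ then forces $\|y - T(y)\| \geq \sqrt{2}\,\delta_i$ for every $(y, T(y)) \in P_i$. Given a bounded subset $U \subseteq \mathbb{R}^n$, set $M_U := \sup_{y \in U} \dist(y, \Fix T) < +\infty$; then for every $y \in U$ with $(y, T(y)) \in P_i$ we have $\dist(y, \Fix T) \leq \bigl(M_U/(\sqrt{2}\,\delta_i)\bigr)\,\|y - T(y)\|$. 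Combining the two cases, any $\kappa$ dominating all of the $\tilde{\kappa}_i$ $(i \in I_+)$ and all of the $M_U/(\sqrt{2}\,\delta_i)$ $(i \in I_0)$ serves as a bounded linear regularity constant on $U$.

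The main obstacle to be resolved is the gap property for the disjoint polyhedra $P_i$ and $\Delta$ appearing in the second case. This rests on the standard facts that the Minkowski sum of two polyhedra is again a (closed) polyhedron and that $\dist(P_i, \Delta) = \dist(0, P_i - \Delta)$; once these are in hand, the remainder of the argument is routine book-keeping over the finite decomposition of $\graph T$.
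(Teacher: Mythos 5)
Your proof is correct, but it takes a genuinely different route from the paper. The paper's argument is structural and short: since the class of polyhedral operators is closed under addition, $F:=\Id-T$ is polyhedral, and Robinson's error-bound result for polyhedral multifunctions immediately gives constants $\kappa_1,\epsilon>0$ with $\dist(x,\Fix T)\leq\kappa_1\|x-T(x)\|$ whenever the residual is below $\epsilon$; the large-residual regime $\|x-T(x)\|\geq\epsilon$ is then handled on a bounded set $U$ by the trivial bound $\dist(x,\Fix T)/\|x-T(x)\|\leq\sup_{y\in U}\dist(y,\Fix T)/\epsilon$. You instead work directly with the decomposition $\graph T=\bigcup_i P_i$ and in effect re-prove the special case of Robinson's result that is needed: on pieces containing fixed points you get a \emph{global} linear bound via Hoffman's lemma applied to $F_i=\{u:(u,u)\in P_i\}$ (the rearrangement $(C_i+D_i)y-b_i\leq D_i(y-T(y))$ and monotonicity of the positive part are correct), while on fixed-point-free pieces you exploit the positive gap between the disjoint polyhedra $P_i$ and the diagonal $\Delta$ (valid because $P_i+(-\Delta)$ is again a polyhedron, hence closed, so $\dist(0,P_i-\Delta)>0$), with the boundedness of $U$ entering only there. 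So the two proofs split $U$ differently -- the paper by residual size, you by which graph piece is active -- and yours is more elementary and self-contained (Hoffman plus Minkowski--Weyl facts) at the cost of length, whereas the paper's is shorter by outsourcing the key estimate to Robinson's theorem. Both yield the same conclusion, and your bookkeeping over the finite index sets $I_+$ and $I_0$ is sound.
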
 
\begin{proof}
Since $\Id$ and $T$ are polyhedral and the class of polyhedral operators is closed under addition \cite[p.~206]{R1981}, the operator $F:=\Id-T$ is also polyhedral. By \cite[Corollary]{R1981} applied to $F$, there exist $\kappa_1>0$ and $\epsilon>0$ such that 
\begin{equation}\label{eq:prop poly1}
 \dist(x,\Fix T)=\dist(x,F^{-1}(0)) \leq \kappa_1 \dist(0,F(x)) = \kappa_1 \|x-T(x)\|
\end{equation}
for all $x\in\Hilbert$ with $\|x-T(x)\|<\epsilon$.
Let $U\subseteq\Hilbert$ be a nonempty bounded set. Then $\kappa_2:=\sup_{x\in U}\dist(x,\Fix T) < +\infty$. Thus, for all $x\in U$ with $\|x-T(x)\|\geq \epsilon$, we have
\begin{equation}\label{eq:prop poly2}
  \frac{\dist(x,\Fix T)}{\|x-T(x)\|} \leq \frac{\dist(x,\Fix T)}{\epsilon} \leq \frac{\kappa_2}{\epsilon}. 
\end{equation}
By combining \eqref{eq:prop poly1} and \eqref{eq:prop poly2}, we deduce
$$ \dist(x,\Fix T) \leq \max\left\{\kappa_1,\frac{\kappa_2}{\epsilon}\right\} \|x-T(x)\|\quad \forall x\in U,$$
which establishes the claimed result.
\end{proof}

One drawback of linear regularity is that is often too restrictive to hold or too difficult to verify in practice (\emph{i.e.,}~beyond polyhedral settings such as Example~\ref{prop:poly}). For further examples, see \cite[Section~2]{BNP2015}. To overcome this shortcoming, the following H\"older counterpart of Definition~\ref{d:lin reg ops} was introduced in \cite[Definition~2.7]{BLT2017}.
\begin{definition}[H\"older regular operators]
An operator $T\colon\Hilbert\to\Hilbert$ is \emph{H\"older regular} on $U\subseteq\Hilbert$ if there exists a constants $\kappa>0$ and $\gamma\in(0,1)$ such that
  $$ \dist(y,\Fix T)\leq\kappa \|y-T(y)\|^\gamma \quad\forall y\in U. $$
If $T$ is H\"older regular on every bounded subset of $\Hilbert$, it is said to be \emph{boundedly H\"older regular}. 
\end{definition}

Recall that a set is \emph{semi-algebraic} if it can be expressed as the union of finitely many sets, each of which can be defined by finitely many polynomial equalities and inequalities. An operator is \emph{semi-algebraic} if its graph is a semi-algebraic set.
\begin{proposition}\label{prop:semi-alg op}
Let $\Hilbert=\mathbb{R}^n$. If $T\colon\Hilbert\to\Hilbert$ is continuous and semi-algebraic with $\Fix T\neq\emptyset$, then $T$ is boundedly H\"older regular.
\end{proposition}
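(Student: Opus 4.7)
The plan is to mimic the strategy of \cref{prop:poly}, replacing the polyhedral error bound of Robinson by an appropriate \L ojasiewicz-type inequality for continuous semi-algebraic data. First, I would verify that all the relevant objects are semi-algebraic. The map $F := \Id - T$ is semi-algebraic as the sum of two semi-algebraic maps, so $\Fix T = F^{-1}(0)$ is semi-algebraic and closed (the latter by continuity of $T$). Consequently, the residual $y \mapsto \|y - T(y)\|$ is continuous and semi-algebraic, and the distance function $y \mapsto \dist(y,\Fix T)$ is $1$-Lipschitz and semi-algebraic (the latter via a Tarski--Seidenberg projection argument applied to the definition of the distance).

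Given a nonempty bounded set $U \subseteq \Hilbert$, the next step is to enclose $\overline{U}$ in a compact semi-algebraic set $K$, for example a closed ball of sufficiently large radius. On $K$ I would invoke the semi-algebraic \L ojasiewicz inequality (see, e.g., Bochnak--Coste--Roy's \emph{Real Algebraic Geometry}): for continuous semi-algebraic $\phi,\psi\colon K\to\mathbb{R}$ with $\{\psi=0\}\cap K \subseteq \{\phi=0\}\cap K$, there exist constants $c>0$ and $r>0$ such that $|\phi(y)|^{r} \leq c\,|\psi(y)|$ for all $y\in K$. Applied with $\phi(y) := \dist(y,\Fix T)$ and $\psi(y) := \|y-T(y)\|$ (both vanishing exactly on $\Fix T$), this yields
\[
 \dist(y,\Fix T) \leq c^{1/r}\,\|y-T(y)\|^{1/r} \qquad \forall y \in U.
\]

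To match the definition of H\"older regularity, which demands an exponent in $(0,1)$, observe that nonexpansiveness of $T$ implies the trivial bound $\|y-T(y)\| \leq 2\,\dist(y,\Fix T)$; combined with the inequality above this forces $r \geq 1$, hence $1/r \in (0,1]$. Choosing any $\gamma \in (0,1)$ with $\gamma \leq 1/r$ and writing $M := \sup_{y\in U}\|y-T(y)\| < +\infty$ (finite because $T$ is Lipschitz and $U$ is bounded), one has $\|y-T(y)\|^{1/r} \leq M^{1/r-\gamma}\,\|y-T(y)\|^{\gamma}$ on $U$, yielding $\dist(y,\Fix T) \leq \kappa\,\|y-T(y)\|^{\gamma}$ with $\kappa := c^{1/r}M^{1/r-\gamma}$. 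The main obstacle is locating the sharp form of the semi-algebraic \L ojasiewicz inequality suited to the present situation; beyond that, the argument is essentially a translation of the polyhedral proof, together with the reduction of the exponent just described.
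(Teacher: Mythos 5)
Your argument is correct and follows essentially the same route as the paper: semi-algebraicity of the residual $y\mapsto\|y-T(y)\|$ and of $\dist(\cdot,\Fix T)$, the {\L}ojasiewicz inequality of Bochnak--Coste--Roy on a closed ball containing $U$, and then a reduction of the exponent into $(0,1)$ (a point the paper glosses over and which your bound via $M:=\sup_{y\in U}\|y-T(y)\|$ handles correctly). The only caveat is that nonexpansiveness is \emph{not} a hypothesis of this proposition, but both places where you invoke it are dispensable: $M<+\infty$ already follows from continuity of $T$ on the compact closure of $U$ in $\mathbb{R}^n$, and the exponent reduction needs no claim that $r\geq 1$, since any $\gamma\in\bigl(0,\min\{1,1/r\}\bigr)$ works.
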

\begin{proof}
Let $U$ be a nonempty bounded set. Then there exists an $R>0$ such that 
 $$U\subseteq \mathbb{B}(0,R):=\{x\in\Hilbert:\|x\|\leq R\}$$
where we note that $\mathbb{B}(0,R)$ is semi-algebraic. Consider the continuous functions 
  $$\phi(y):=\|y-T(y)\|\text{~~and~~}\psi(y):=\dist(y,\Fix T).$$
Since $\|\cdot\|$ and $\Id-T$ are semi-algebraic as, their composition, the function $\phi$ is also semi-algebraic \cite[Proposition~2.2.6(i)]{BCF}. Since $\Fix T=\phi^{-1}(0)$ and $\phi$ is semi-algebraic, the set $\Fix T$ is also semi-algebraic by \cite[Proposition~2.2.7]{BCF}. By \cite[Proposition~2.2.8(i)]{BCF}, it then follows that $\dist(\cdot,\Fix T)$ is semi-algebraic. Thus, since $\phi$ and $\psi$ are continuous semi-algebraic functions with $\psi^{-1}(0)=\phi^{-1}(0)=\Fix T\neq\emptyset$, {\L}ojasiewicz's inequality \cite[Corollary~2.6.7]{BCF} implies that there exists constants $\kappa>0$ and $\gamma\in(0,1)$ such that 
  $$ \dist(x,\Fix T)=|\psi(x)| \leq \kappa|\phi(x)|^\gamma =\kappa\|x-T(x)\|^\gamma \quad\forall x\in\mathbb{B}(0,R), $$
and the claimed result follows.
\end{proof}

\begin{example}[Forward-backward operator]
 Let $\Hilbert=\mathbb{R}^n$ and consider the \emph{monotone inclusion}
   \begin{equation}\label{eq:monotone inclusion}
    0\in(A+B)(x), 
   \end{equation}
where $A\colon\Hilbert\setto\Hilbert$ is maximally monotone and $B\colon\Hilbert\to\Hilbert$ is monotone and continuous. This problem arises, for instance, as the optimality conditions of the minimisation problem 
  \begin{equation}\label{eq:min}
   \min_{x\in\Hilbert}g(x)+f(x),
  \end{equation}
where $g\colon\Hilbert\to(-\infty,+\infty]$ is proper, lsc, convex and $f\colon\Hilbert\to\mathbb{R}$ is convex and differentiable. More precisely, by setting $A=\partial g$ (\emph{i.e.,} the convex subdifferential of $g$) and $B=\nabla f$.

The \emph{forward-backward operator} $T\colon\Hilbert\to\Hilbert$ for \eqref{eq:monotone inclusion} with stepsize $\lambda>0$ is given by 
 $$ T := (\Id+\lambda A)^{-1}\circ(\Id-\lambda B), $$
where the \emph{resolvent operator} $(\Id+\lambda A)^{-1}$ is single-valued and continuous with full domain \cite[Proposition~23.10]{BauCom2nd}. Then $T$ is continuous and $\Fix T=(A+B)^{-1}(0)$. Moreover, $T$ is semi-algebraic, and hence boundedly H\"older regular by Proposition~\ref{prop:semi-alg op}, whenever $A$ and $B$ are semi-algebraic. Indeed, if $A$ and $B$ are semi-algebraic, then so are $\Id+\lambda A$ and $\Id-\lambda B$. And, since $(u,v)\in\graph(\Id+\lambda A)$ if and only if $(v,u)\in\graph(\Id+\lambda A)^{-1}$, the resolvent operator is also semi-algebraic. As the composition of two semi-algebraic operators, $T$ is therefore also semi-algebraic.

Since the subdifferential of a convex semi-algebraic function is again semi-algebraic (see, for instance, \cite{I,DL}), we also note that, in particular, the forward-backward operator applied to \eqref{eq:min} is boundedly H\"older regular when $f$ and $g$ are semi-algebraic.
\end{example}

\begin{remark}\label{re:bounded reg equiv}
Let $T\colon\Hilbert\to\Hilbert$ and let $z\in\Hilbert$. Then, it is immediate from the respective definitions, that  $T$ is boundedly linearly (resp.\ H\"older) regular if and only if $T$ is boundedly linearly (resp.\ H\"older) regular on $\mathbb{B}(z,R)$ for all $R>0$.
\end{remark}

\section{Convergence of trajectories with regularity}\label{s:convergence}
In this section, we show a refinement of Theorem~\ref{th:botcsetnek}. Namely, that the convergence rate of the trajectories in \eqref{eq:system} can be given when the operator $T$ is boundedly regular. Although it will not always be explicitly stated within this section's proofs to avoid repetition, identities will sometime be understood to hold for almost all $t\in[0,+\infty)$ due the identity for $\dot{x}$ in Definition~\ref{d:strong global soln}(ii).

We shall require the following lemmata as well as the well-known identity:
\begin{equation}\label{eq:affine comb}
 \|(1-\alpha)u+\alpha v\|^2 + \alpha(1-\alpha)\|u-v\|^2 = (1-\alpha)\|u\|^2+\alpha\|v\|^2\quad \forall \alpha\in\mathbb{R},\,\forall u,v\in\Hilbert.
\end{equation}

\begin{lemma}\label{l:avg}
Let $x$ be the unique strong global solution of \eqref{eq:system}, let $x^*\in\Fix T$ and 
suppose $\inf_{t\geq 0}\lambda(t)>0$. For almost all $t\in[0,+\infty)$, we have
$$ \|\dot{x}(t)+x(t)-x^*\|^2 + \frac{1-\lambda(t)}{\lambda(t)}\|\dot{x}(t)\|^2 \leq \|x(t)-x^*\|^2. $$
\end{lemma}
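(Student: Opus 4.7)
The plan is to prove the inequality pointwise almost everywhere by exploiting the relation $\dot{x}(t)=\lambda(t)(T(x(t))-x(t))$ (valid a.e.\ by \Cref{d:strong global soln}(ii)) and the convex-combination identity \eqref{eq:affine comb}. The hypothesis $\inf_{t\geq 0}\lambda(t)>0$ is used only to avoid division by zero when solving for $T(x(t))-x(t)$.

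First, I would rewrite the quantity $\dot x(t)+x(t)-x^*$ as an affine combination. Since $\dot{x}(t)=\lambda(t)\bigl(T(x(t))-x(t)\bigr)$, one has
\[
 \dot{x}(t)+x(t)-x^* \;=\; (1-\lambda(t))\bigl(x(t)-x^*\bigr) + \lambda(t)\bigl(T(x(t))-x^*\bigr),
\]
and moreover $T(x(t))-x(t) = \dot{x}(t)/\lambda(t)$, so that $\|T(x(t))-x(t)\|^2 = \|\dot{x}(t)\|^2/\lambda(t)^2$.

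Next, I would apply the identity \eqref{eq:affine comb} with $\alpha=\lambda(t)\in[0,1]$, $u=x(t)-x^*$, and $v=T(x(t))-x^*$. This yields
\[
 \|\dot{x}(t)+x(t)-x^*\|^2 + \lambda(t)\bigl(1-\lambda(t)\bigr)\,\|x(t)-T(x(t))\|^2
 \;=\; (1-\lambda(t))\|x(t)-x^*\|^2 + \lambda(t)\|T(x(t))-x^*\|^2.
\]
Substituting $\|x(t)-T(x(t))\|^2 = \|\dot{x}(t)\|^2/\lambda(t)^2$ converts the second term on the left into $\tfrac{1-\lambda(t)}{\lambda(t)}\|\dot{x}(t)\|^2$, exactly the expression appearing in the statement.

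Finally, since $x^*\in\Fix T$ and $T$ is nonexpansive, $\|T(x(t))-x^*\|\leq\|x(t)-x^*\|$, so the right-hand side is bounded above by $\|x(t)-x^*\|^2$, giving the claimed inequality. There is no genuine obstacle here; the only care needed is to note that $\lambda(t)>0$ almost everywhere (guaranteed by $\inf_{t\geq 0}\lambda(t)>0$) so that dividing by $\lambda(t)^2$ is legitimate at the points where the a.e.\ identity for $\dot{x}$ holds.
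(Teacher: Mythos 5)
Your proposal is correct and follows essentially the same route as the paper: rewrite $\dot{x}(t)+x(t)-x^*$ as the affine combination $(1-\lambda(t))(x(t)-x^*)+\lambda(t)(T(x(t))-x^*)$, apply the identity \eqref{eq:affine comb}, substitute $\|x(t)-T(x(t))\|^2=\|\dot{x}(t)\|^2/\lambda(t)^2$, and finish with nonexpansivity of $T$ at $x^*\in\Fix T$. The only (harmless) difference is that you make explicit the role of $\inf_{t\geq 0}\lambda(t)>0$ in justifying the division by $\lambda(t)$, which the paper leaves implicit.
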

\begin{proof}
By applying \eqref{eq:affine comb} followed by nonexpansivity of $T$, we obtain
\begin{align*}
 &\|\dot{x}(t)+x(t)-x^*\|^2 \\
  &\quad= \|(1-\lambda(t))(x(t)-x^*)+\lambda(t)(T(x(t))-x^*)\|^2 \\
  &\quad= (1-\lambda(t))\|x(t)-x^*\|^2 + \lambda(t)\|T(x(t))-x^*\|^2 - \lambda(t)(1-\lambda(t))\|x(t)-T(x(t))\|^2 \\
  &\quad\leq \|x(t)-x^*\|^2 - \frac{1-\lambda(t)}{\lambda(t)}\|\dot{x}(t)\|^2,
\end{align*} 
which completes the proof of the result.
\end{proof}

\begin{proposition}[{\cite[Corollary~12.31]{BauCom2nd}}]\label{prop:grad proj}
Let $C\subseteq\Hilbert$ be a nonempty closed convex set. Then $x\mapsto\dist^2(x,C)$ is Fr\'echet differentiable on $\Hilbert$ with $\nabla\dist^2(\cdot,C)=2(\Id-P_C)$.
\end{proposition}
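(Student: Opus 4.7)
The plan is to verify Fr\'echet differentiability of $f := \dist^2(\cdot,C)$ directly at an arbitrary point $x \in \Hilbert$, with putative gradient $2(x - P_C(x))$. Since $C$ is nonempty closed and convex, the metric projection $P_C$ is single-valued and nonexpansive on $\Hilbert$, and $f(x) = \|x - P_C(x)\|^2$. The core technique will be a ``competing-point'' argument in the infimum defining the distance, applied in both directions, followed by an elementary control of the error via nonexpansivity of $P_C$.

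For the upper bound on the increment, I would test the infimum $f(x+h) = \inf_{c \in C}\|x+h-c\|^2$ with $c = P_C(x) \in C$, yielding
\[ f(x+h) \leq \|x+h - P_C(x)\|^2 = f(x) + 2\langle x - P_C(x), h\rangle + \|h\|^2. \]
Symmetrically, testing $f(x) \leq \|x - P_C(x+h)\|^2$ and expanding $x = (x+h) - h$ gives
\[ f(x) \leq f(x+h) - 2\langle x+h - P_C(x+h), h\rangle + \|h\|^2. \]
Writing $\Delta(h) := f(x+h) - f(x) - 2\langle x - P_C(x), h\rangle$, the first inequality yields $\Delta(h) \leq \|h\|^2$, while the second, after subtracting $2\langle x - P_C(x),h\rangle$ from both sides and regrouping, gives
\[ \Delta(h) \geq \|h\|^2 - 2\langle P_C(x+h) - P_C(x), h\rangle. \]

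The final step is to control the remaining inner-product term: by Cauchy--Schwarz and nonexpansivity of $P_C$, one has $|\langle P_C(x+h) - P_C(x), h\rangle| \leq \|h\|^2$, so $\Delta(h) \geq -\|h\|^2$. Combining both sides yields $|\Delta(h)| \leq \|h\|^2$, and dividing by $\|h\|$ shows $\Delta(h)/\|h\| \to 0$ as $h\to 0$. This establishes Fr\'echet differentiability at $x$ with $\nabla f(x) = 2(x - P_C(x))$, and since $x$ was arbitrary, the identity $\nabla f = 2(\Id - P_C)$ follows. There is no real obstacle here: the only delicate point is recognizing that nonexpansivity of $P_C$ is precisely what converts the lower bound's error term from a linear $\|h\|$ quantity into a quadratic $\|h\|^2$ one, which is what differentiates Fr\'echet differentiability from mere G\^ateaux differentiability.
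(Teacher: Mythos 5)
Your proof is correct. The paper itself offers no argument for this proposition: it is quoted verbatim from \cite[Corollary~12.31]{BauCom2nd}, where it arises as a special case of the Fr\'echet differentiability of Moreau envelopes (note that $\tfrac12\dist^2(\cdot,C)$ is the Moreau envelope of the indicator function of $C$, so the gradient formula $\lambda^{-1}(\Id-\prox_{\lambda f})$ specialises to $\Id-P_C$). Your route is genuinely different and more elementary: a two-sided estimate of the increment obtained by testing the infimum defining $\dist^2$ with the two competitors $P_C(x)$ and $P_C(x+h)$, which gives $\Delta(h)\leq\|h\|^2$ and $\Delta(h)\geq\|h\|^2-2\langle P_C(x+h)-P_C(x),h\rangle\geq-\|h\|^2$, the last step using Cauchy--Schwarz and nonexpansivity of $P_C$; the resulting bound $|\Delta(h)|\leq\|h\|^2$ is even quantitative (a uniform quadratic error, not just $o(\|h\|)$). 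I verified the algebra in both expansions and the regrouping leading to the lower bound; it is sound. The only ingredient you import without proof is nonexpansivity of the projector onto a nonempty closed convex subset of a Hilbert space, which is standard and also available in \cite{BauCom2nd}. What the cited route buys is generality (differentiability of envelopes of arbitrary proper lsc convex functions); what yours buys is a short, self-contained verification that needs nothing beyond the variational characterisation of $P_C$, which is entirely adequate for the use made of the proposition in Lemma~\ref{l:ineqs}.
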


\begin{lemma}\label{l:ineqs}
Let $x$ be the unique strong global solution of \eqref{eq:system}. Suppose $\Fix T\neq\emptyset$ and $\inf_{t\geq 0}\lambda(t)>0$. Then, for almost all $t\in[0,+\infty)$, we have
\begin{enumerate}[(i)]
\item\label{l:ineqs:FixT} $\displaystyle\frac{d}{dt}\dist^2(x(t),\Fix T)  \leq -\lambda(t)\|x(t)-T(x(t))\|^2$, and
\item\label{l:ineqs:xbar}  $\displaystyle\frac{d}{dt}\|x(t)-x^*\|^2 
   \leq  -\lambda(t)(1-\lambda(t))\|x(t)-T(x(t))\|^2- \|\dot{x}(t)\|^2$ for all $x^*\in\Fix T$.
\end{enumerate}   
\end{lemma}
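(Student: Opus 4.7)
The plan is to extract from Lemma~\ref{l:avg} a single pointwise inequality that feeds both parts simultaneously. Expanding $\|\dot{x}(t)+x(t)-x^*\|^2 = \|\dot{x}(t)\|^2 + 2\langle \dot{x}(t), x(t)-x^*\rangle + \|x(t)-x^*\|^2$ in the conclusion of Lemma~\ref{l:avg} and cancelling the common term $\|x(t)-x^*\|^2$ yields
\[
 2\langle \dot{x}(t),x(t)-x^*\rangle \;\leq\; -\|\dot{x}(t)\|^2 - \tfrac{1-\lambda(t)}{\lambda(t)}\|\dot{x}(t)\|^2 \;=\; -\tfrac{1}{\lambda(t)}\|\dot{x}(t)\|^2
\]
for almost every $t$ and every $x^* \in \Fix T$. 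Substituting the identity $\|\dot{x}(t)\|^2 = \lambda(t)^2\|x(t)-T(x(t))\|^2$ converts the right-hand side into $-\lambda(t)\|x(t)-T(x(t))\|^2$; this is the master inequality driving both claims.

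For~(ii), I would apply the chain rule for absolutely continuous Hilbert-valued functions to $t \mapsto \|x(t)-x^*\|^2$ (with fixed $x^*$) to conclude $\frac{d}{dt}\|x(t)-x^*\|^2 = 2\langle \dot{x}(t),x(t)-x^*\rangle$ almost everywhere, invoke the master inequality, and then split $-\lambda(t)\|x(t)-T(x(t))\|^2 = -\lambda(t)(1-\lambda(t))\|x(t)-T(x(t))\|^2 - \lambda(t)^2\|x(t)-T(x(t))\|^2$, recognising the last term as $-\|\dot{x}(t)\|^2$.

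For~(i), I would note that $\Fix T$ is closed and convex (as $T$ is nonexpansive), so Proposition~\ref{prop:grad proj} applies. The chain rule then gives $\frac{d}{dt}\dist^2(x(t),\Fix T) = 2\langle x(t)-P_{\Fix T}(x(t)),\dot{x}(t)\rangle$ almost everywhere. Since the master inequality is valid for \emph{every} point of $\Fix T$, I may select the pointwise choice $x^* = P_{\Fix T}(x(t)) \in \Fix T$ and read off the claimed bound immediately.

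The main technical subtlety is justifying the chain rule for the composition of the absolutely continuous trajectory $x(\cdot)$ with the Fr\'echet differentiable functions $\|\cdot-x^*\|^2$ and $\dist^2(\cdot,\Fix T)$. This is standard: continuous differentiability of each of these functions (continuity of the gradient $2(\Id - P_{\Fix T})$ follows from nonexpansivity of $P_{\Fix T}$), combined with absolute continuity of $x$ on compact intervals, ensures the composition is absolutely continuous with the expected derivative almost everywhere.
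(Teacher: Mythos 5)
Your proof is correct and follows essentially the same route as the paper: both hinge on Lemma~\ref{l:avg} (applied with the pointwise choice $x^*=P_{\Fix T}(x(t))$ for part~(i)), on Proposition~\ref{prop:grad proj} together with the chain rule, and on the a.e.\ identity $\|\dot{x}(t)\|^2=\lambda(t)^2\|x(t)-T(x(t))\|^2$. The only difference is organizational: you expand the square in Lemma~\ref{l:avg} once to obtain a single inner-product bound $2\langle\dot{x}(t),x(t)-x^*\rangle\leq-\lambda(t)\|x(t)-T(x(t))\|^2$ that feeds both parts, whereas the paper performs the equivalent expansion separately within each part.
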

\begin{proof}
\eqref{l:ineqs:FixT}:~Since $T$ is nonexpansive, $F:=\Fix T$ is nonempty, closed and convex \cite[Proposition~4.13]{BauCom2nd}. The chain-rule together with Proposition~\ref{prop:grad proj} therefore implies
\begin{equation*}
\begin{aligned}
\frac{d}{dt}\dist^2(x(t),F) 
&= \langle \dot{x}(t),\nabla \dist^2(\cdot,F)(x(t)) \rangle \\
&= 2\langle \dot{x}(t),x(t)-P_F(x(t))\rangle \\
&= \|\dot{x}(t)+x(t)-P_F(x(t))\|^2-\|\dot{x}(t)\|^2-\|x(t)-P_F(x(t))\|^2 \\
&= \|\dot{x}(t)+x(t)-P_F(x(t))\|^2-\lambda(t)^2\|x(t)-T(x(t))\|^2-\|x(t)-P_F(x(t))\|^2.
\end{aligned}
\end{equation*}
Since $P_F(x(t))\in F=\Fix T$, Lemma~\ref{l:avg} then gives
\begin{equation*} %\label{eq:avg}
\|\dot{x}(t)+x(t)-P_F(x(t))\|^2 \leq \|x(t)-P_F(x(t))\|^2 - \lambda(t)\bigl(1-\lambda(t)\bigr)\|x(t)-T(x(t))\|^2.
\end{equation*}
The claimed inequality follows by combining the previous two equations.

\eqref{l:ineqs:xbar}:~For any $\bar{x}\in\Fix T$, we have
\begin{align*}
 \frac{d}{dt}\|x(t)-\bar{x}\|^2 
   = 2\langle\dot{x}(t),x(t)-\bar{x}\rangle \
   &= \|\dot{x}(t)+x(t)-\bar{x}\|^2 - \|\dot{x}(t)\|^2 - \|x(t)-\bar{x}\|^2.
\end{align*}
The result then follows by combining this equality with Lemma~\ref{l:avg}.
\end{proof}

We shall also require the following well-known, classical result.
\begin{lemma}[Gr\"onwall's inequality]\label{l:groenwall}
Let $u\colon[0,+\infty)\to[0,+\infty)$ be absolutely continuous. Suppose there exists $\alpha>0$ such that, for almost all $t\in[0,+\infty)$, we have
 $$ \frac{d}{dt}u(t) \leq -\alpha u(t). $$
Then $u(t) \leq \exp(-\alpha t)u(0)$ for all $t\in[0,+\infty)$.
\end{lemma}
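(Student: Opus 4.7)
The plan is to reduce the differential inequality to the statement that a suitable auxiliary function is nonincreasing, which is the natural integrating-factor approach. Set $v(t):=\exp(\alpha t)u(t)$ on $[0,+\infty)$. Since $u$ is absolutely continuous on every compact subinterval of $[0,+\infty)$ and $t\mapsto\exp(\alpha t)$ is locally Lipschitz, the product $v$ is also absolutely continuous on every compact subinterval. In particular, $v$ is differentiable almost everywhere on $[0,+\infty)$ and satisfies the fundamental theorem of calculus on each $[0,b]$, that is, $v(t)=v(0)+\int_0^t v'(s)\,ds$ for all $t\in[0,b]$.

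Next, I would compute $v'$ at points where both $u'$ and the product rule apply (a full-measure set). At such $t$,
\begin{equation*}
  v'(t) = \alpha\exp(\alpha t)u(t) + \exp(\alpha t)\dot u(t) = \exp(\alpha t)\bigl(\alpha u(t)+\dot u(t)\bigr) \leq 0,
\end{equation*}
by the hypothesis $\dot u(t)\leq -\alpha u(t)$. Integrating from $0$ to $t$ and using absolute continuity of $v$ then yields $v(t)\leq v(0)=u(0)$ for every $t\geq 0$. Multiplying through by $\exp(-\alpha t)$ gives the claimed bound $u(t)\leq \exp(-\alpha t)u(0)$.

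The only subtle point is the justification that the product of two absolutely continuous functions on a compact interval is again absolutely continuous and admits the pointwise product rule almost everywhere; this is standard (it follows, for example, from the fact that absolutely continuous functions on a compact interval are exactly the indefinite integrals of their a.e.-derivatives, combined with the Leibniz rule for Lebesgue integrals). No sign issues arise because the hypothesis $u\geq 0$ is not even needed for the argument — it only enters to ensure that the conclusion $u(t)\leq\exp(-\alpha t)u(0)$ is a meaningful decay estimate. I do not foresee a significant obstacle; the proof is essentially a textbook application of the integrating-factor technique to absolutely continuous functions.
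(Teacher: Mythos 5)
Your proof is correct. Note that the paper does not actually prove this lemma---it is stated as a well-known classical result and used as a black box---so your integrating-factor argument (showing $v(t)=\exp(\alpha t)u(t)$ is absolutely continuous with $v'\leq 0$ a.e., hence nonincreasing) simply supplies the standard proof; it is also the same auxiliary-monotone-function device the paper itself employs in its proof of the Bihari--LaSalle inequality (Lemma~\ref{l:bls}), and your observation that nonnegativity of $u$ is not needed is accurate.
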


The following theorem is our first main result. It shows that the dynamical system \eqref{eq:system} is \emph{exponentially stable} when $T$ is boundedly linearly regular.

\begin{theorem}\label{th:linear reg convergence}
Suppose $T\colon\Hilbert\to\Hilbert$ is nonexpansive with $\Fix T\neq\emptyset$ and $\lambda\colon[0,+\infty)\to[0,1]$ is Lebesgue measurable with $\lambda^\ast:=\inf_{t\geq 0}\lambda(t)>0$. Let $x$ be the unique strong global solution of \eqref{eq:system}. If $T$ is boundedly linearly regular, then there exists $\bar{x}\in\Fix T$ and $\kappa>0$ such that, for almost all $t\in[0,+\infty)$, we have
  $$ \|x(t)-\bar{x}\| \leq 2\exp\left(-\frac{\lambda^*}{2\kappa^2}t\right)\dist(x_0,\Fix T). $$ 
That is, the trajectory $x(t)$ converges exponentially to $\bar{x}$ as $t\to+\infty$.
\end{theorem}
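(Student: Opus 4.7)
The plan is to combine Lemma~\ref{l:ineqs}\eqref{l:ineqs:FixT} with bounded linear regularity to obtain an exponential decay of $\dist^2(x(t),\Fix T)$ via Gr\"onwall, and then transfer this decay from $\dist(x(t),\Fix T)$ to $\|x(t)-\bar{x}\|$ using a triangle-inequality / weak-lower-semicontinuity argument based on Lemma~\ref{l:ineqs}\eqref{l:ineqs:xbar}.

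First I would invoke Theorem~\ref{th:botcsetnek} to obtain a point $\bar{x}\in\Fix T$ with $x(t)\wto\bar{x}$ and, crucially, that the trajectory $\{x(t):t\geq 0\}$ is bounded. Choose $R>0$ large enough that $\{x(t):t\geq 0\}\cup\{\bar{x}\}\subseteq\mathbb{B}(0,R)$. By Remark~\ref{re:bounded reg equiv}, bounded linear regularity supplies a constant $\kappa>0$ such that $\dist(y,\Fix T)\leq \kappa\|y-T(y)\|$ for every $y\in\mathbb{B}(0,R)$. Setting $v(t):=\dist^2(x(t),\Fix T)$, Lemma~\ref{l:ineqs}\eqref{l:ineqs:FixT} then yields, for a.e.\ $t\geq 0$,
\[
\frac{d}{dt}v(t)\leq -\lambda(t)\|x(t)-T(x(t))\|^2\leq -\frac{\lambda^\ast}{\kappa^2}\dist^2(x(t),\Fix T)=-\frac{\lambda^\ast}{\kappa^2}v(t).
\]
Here I would take care to note that $v$ is absolutely continuous on compact intervals: Proposition~\ref{prop:grad proj} shows $\dist^2(\cdot,\Fix T)$ has gradient $2(\Id-P_{\Fix T})$, which is bounded on the ball $\mathbb{B}(0,R)$, so $\dist^2(\cdot,\Fix T)$ is Lipschitz there, and composition with the absolutely continuous $x$ preserves absolute continuity.

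Applying Gr\"onwall (Lemma~\ref{l:groenwall}) to $v$ and taking square roots gives
\[
\dist(x(t),\Fix T)\leq \exp\!\left(-\frac{\lambda^\ast}{2\kappa^2}t\right)\dist(x_0,\Fix T)\qquad\forall t\geq 0.
\]
To upgrade this to a bound on $\|x(t)-\bar{x}\|$, fix $t\geq 0$ and set $p_t:=P_{\Fix T}(x(t))\in\Fix T$. By Lemma~\ref{l:ineqs}\eqref{l:ineqs:xbar} applied with $x^\ast=p_t$, the function $s\mapsto\|x(s)-p_t\|^2$ is nonincreasing on $[t,+\infty)$, so $\|x(s)-p_t\|\leq \|x(t)-p_t\|=\dist(x(t),\Fix T)$ for every $s\geq t$. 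Since $x(s)\wto\bar{x}$, weak lower semicontinuity of the norm gives
\[
\|\bar{x}-p_t\|\leq \liminf_{s\to+\infty}\|x(s)-p_t\|\leq \dist(x(t),\Fix T),
\]
and the triangle inequality then yields $\|x(t)-\bar{x}\|\leq \|x(t)-p_t\|+\|p_t-\bar{x}\|\leq 2\dist(x(t),\Fix T)$. Combining with the exponential decay of $\dist(x(t),\Fix T)$ produces the advertised estimate.

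The two mildly delicate points are verifying that $v$ is absolutely continuous (so Gr\"onwall truly applies), and the weak-limit identification in the last step: strong exponential convergence to a specific point is obtained without strengthening the weak convergence from Theorem~\ref{th:botcsetnek}, by exploiting that the Fej\'er-type monotonicity in Lemma~\ref{l:ineqs}\eqref{l:ineqs:xbar} controls $\|x(s)-p_t\|$ uniformly for all $s\geq t$, which passes to the weak limit via norm lower semicontinuity. I do not foresee other obstacles.
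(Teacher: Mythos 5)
Your proposal is correct and follows essentially the same route as the paper: bounded linear regularity plus Lemma~\ref{l:ineqs}\eqref{l:ineqs:FixT} feeds Gr\"onwall's inequality applied to $t\mapsto\dist^2(x(t),\Fix T)$, and the Fej\'er-type monotonicity from Lemma~\ref{l:ineqs}\eqref{l:ineqs:xbar} combined with weak lower semicontinuity of the norm yields $\|x(t)-\bar{x}\|\leq 2\dist(x(t),\Fix T)$. Your rearrangement of the last step (bounding $\|\bar{x}-P_{\Fix T}(x(t))\|$ first and then using the triangle inequality, rather than bounding $\|x(t)-x(s)\|$ directly) and your explicit check that $\dist^2(x(\cdot),\Fix T)$ is absolutely continuous are only cosmetic refinements of the paper's argument.
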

\begin{proof}
By Theorem~\ref{th:botcsetnek}, the trajectory $x$ is bounded and $x(t)$ converges weakly to a point $\bar{x}\in\Fix T$ as $t\to+\infty$. Thus, since $T$ is boundedly linearly regular, there exists $\kappa>0$ such that
$$ \dist(x(t),\Fix T)\leq \kappa \|x(t)-T\big(x(t)\bigr)\|. $$
Combining this with Lemma~\ref{l:ineqs}\eqref{l:ineqs:FixT} yields
  $$ \frac{d}{dt}\dist^2(x(t),\Fix T)  \leq -\lambda(t)\|x(t)-T(x(t))\|^2 \leq -\frac{\lambda^*}{\kappa^2}\dist^2(x(t),\Fix T). $$  
By applying Gr\"onwall's inequality (Lemma~\ref{l:groenwall}) to the function $t\mapsto\dist^2(x(t),\Fix T)$, we obtain
  \begin{equation}
  \label{eq:key1}\dist^2(x(t),\Fix T)\leq \exp\left(-\frac{\lambda^*}{\kappa^2}t\right)\dist^2(x_0,\Fix T). 
  \end{equation} 
Let $x^*\in\Fix T$ be arbitrary. By Lemma~\ref{l:ineqs}\eqref{l:ineqs:xbar}, we have $\frac{d}{dt}\|x(t)-x^*\|^2\leq 0$ and hence the function $t\mapsto\|x(t)-x^*\|^2$ is nonincreasing. Assuming that $s>t$, we deduce
$$ \|x(t)-x(s)\| \leq \|x(t)-x^*\| + \|x(s)-x^*\|  \leq 2\|x(t)-x^*\|. $$
Using weak lower semicontinuity of the norm and setting $x^*=P_{\Fix T}\bigl(x(t)\bigr)$ then gives
\begin{equation}\label{eq:key2}
\|x(t)-\bar{x}\| \leq \liminf_{s\to+\infty}\|x(t)-x(s)\| \leq 2\dist(x(t),\Fix T).
\end{equation}
The result then follows by combining \eqref{eq:key1} and \eqref{eq:key2}.
\end{proof}

In the following theorem, we make use of the following generalisation of Gr\"onwall's inequality.
\begin{lemma}[Bihari--LaSalle inequality]\label{l:bls}
Let $u\colon[0,+\infty)\to[0,+\infty)$ be absolutely continuous. 
Suppose there exists $\alpha>0$ and $\gamma\in(0,1)$ such that, for almost all $t\in[0,+\infty)$, we have
 \begin{equation}\label{eq:bls}
 \frac{d}{dt}u(t)\leq -\alpha u(t)^{\frac{1}{\gamma}}.
 \end{equation}
Then there exists a constant $M>0$ such that 
  $ u(t) \leq Mt^{-\frac{\gamma}{1-\gamma}} $ for all $t\in[0,+\infty)$.
\end{lemma}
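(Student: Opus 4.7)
My plan is to reduce the differential inequality to one that can be integrated in closed form by considering a suitable power of $u$. First I would observe that, since $\dot u(t)\leq -\alpha u(t)^{1/\gamma}\leq 0$ almost everywhere and $u$ is absolutely continuous, $u$ is nonincreasing; in particular $u(t)\leq u(0)$ for all $t\geq 0$. I would then split into two cases. If $u(t_0)=0$ for some $t_0\geq 0$, then by monotonicity $u\equiv 0$ on $[t_0,+\infty)$, where the claim is trivial; combined with the crude bound $u\leq u(0)$ on $[0,t_0]$ and the fact that $t^{-\gamma/(1-\gamma)}\to+\infty$ as $t\to 0^+$, the constant $M$ can be chosen large enough to absorb the remaining compact interval. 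So the essential case is $u(t)>0$ for all $t\geq 0$.

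In this case, I would introduce $\beta := \tfrac{1}{\gamma}-1>0$ and consider the auxiliary function $v(t):=u(t)^{-\beta}$. Since $u$ is nonincreasing and strictly positive, it is bounded away from $0$ on every compact subinterval of $[0,+\infty)$, so $v$ is absolutely continuous on each such interval and the chain rule gives
$$ \dot v(t) = -\beta\, u(t)^{-\beta-1}\dot u(t) $$
almost everywhere. Using $\beta+1=1/\gamma$ together with \eqref{eq:bls}, this reduces to
$$ \dot v(t)\geq \alpha\beta\, u(t)^{1/\gamma-\beta-1} = \alpha\beta $$
for almost every $t$. Integrating from $0$ to $t$ then yields
$$ u(t)^{-\beta}\geq u(0)^{-\beta}+\alpha\beta t\geq \alpha\beta t. $$

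Finally I would invert this inequality and use $1/\beta = \gamma/(1-\gamma)$ to obtain
$$ u(t) \leq (\alpha\beta t)^{-1/\beta} = \left(\frac{\gamma}{\alpha(1-\gamma)}\right)^{\gamma/(1-\gamma)} t^{-\gamma/(1-\gamma)}\quad\forall t>0, $$
the bound at $t=0$ being vacuous. The main technical point is the chain-rule step, which requires $u$ to be bounded away from $0$ on the interval of integration; this is precisely what motivates the preliminary case distinction based on whether $u$ ever vanishes. Once this is dispatched, the argument is essentially a direct imitation of the exact solution of the scalar separable ODE $\dot u = -\alpha u^{1/\gamma}$, which already exhibits the claimed polynomial decay.
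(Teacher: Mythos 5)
Your argument is correct and is essentially the paper's own proof: your auxiliary function $v=u^{-\beta}=u^{1-\frac{1}{\gamma}}$ is exactly the quantity the paper differentiates (there packaged as $u^{1-1/\gamma}+(1-1/\gamma)\alpha t$ being nondecreasing), and the case distinction on whether $u$ vanishes, the integration, and the final constant $\bigl(\tfrac{\gamma}{\alpha(1-\gamma)}\bigr)^{\gamma/(1-\gamma)}$ all coincide. Your extra care about absolute continuity of $u^{-\beta}$ and about the interval $[0,t_0]$ in the degenerate case only makes explicit details the paper leaves implicit.
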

\begin{proof}
If there exists $t_0\geq 0$ such that $u(t_0)=0$, then \eqref{eq:bls} implies that $u(t)=0$ for all $t\geq t_0$ and the result trivially holds. Thus, we suppose that $u>0$. In this case, since $1-1/\gamma<0$, we have
\begin{align*}
\frac{d}{dt}\left( u(t)^{1-\frac{1}{\gamma}} + \left(1-\frac{1}{\gamma}\right)\alpha t\right) 
&= \left(1-\frac{1}{\gamma}\right)u(t)^{-\frac{1}{\gamma}}\left( \frac{d}{dt}u(t) + \alpha u(t)^\frac{1}{\gamma} \right) \geq 0. 
\end{align*}
Thus, since $t\mapsto u(t)^{1-\frac{1}{\gamma}} + \left(1-\frac{1}{\gamma}\right)\alpha t$ is non-decreasing and absolutely continuous, we have
 $$ u(t)^{1-\frac{1}{\gamma}} + \left(1-\frac{1}{\gamma}\right)\alpha t \geq u(0)^{1-\frac{1}{\gamma}} \geq 0, $$
which implies 
  $$ u(t) \leq \left(\frac{\gamma}{\alpha(1-\gamma)}\right)^\frac{\gamma}{1-\gamma}t^{-\frac{\gamma}{1-\gamma}}. $$
This establishes the result and completes the proof.  
\end{proof}

The following theorem is the H\"older regular analogue of Theorem~\ref{th:linear reg convergence}. It is our second main result.

\begin{theorem}\label{th:hoelder reg convergence}
Suppose $T\colon\Hilbert\to\Hilbert$ is nonexpansive with $\Fix T\neq\emptyset$ and $\lambda\colon[0,+\infty)\to[0,1]$ is Lebesgue measurable with $\lambda^\ast:=\inf_{t\geq 0}\lambda(t)>0$. Let $x$ be the unique strong global solution of \eqref{eq:system}. If $T$ is boundedly H\"older regular, then there exists $\bar{x}\in\Fix T$, $M>0$ and $\gamma\in(0,1)$ such that, for almost all $t\in[0,+\infty)$, we have
$$ \|x(t)-\bar{x}\| \leq M\,t^{-\frac{\gamma}{2(1-\gamma)}}. $$
That is, the trajectory $x(t)$ converges with order $\rho:=\frac{\gamma}{2(1-\gamma)}>0$ to $\bar{x}$ as $t\to+\infty$.
\end{theorem}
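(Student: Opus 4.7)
The plan is to mirror the structure of the proof of Theorem~\ref{th:linear reg convergence}, with two modifications: use bounded H\"older regularity in place of linear regularity, and use the Bihari--LaSalle inequality (Lemma~\ref{l:bls}) in place of Gr\"onwall's inequality (Lemma~\ref{l:groenwall}).

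First, I would invoke Theorem~\ref{th:botcsetnek} to obtain $\bar{x}\in\Fix T$ such that $x(t)\wto\bar{x}$ and, crucially, such that the trajectory $x(\cdot)$ is bounded. By Remark~\ref{re:bounded reg equiv}, bounded H\"older regularity of $T$ then yields constants $\kappa>0$ and $\gamma\in(0,1)$ with
$$ \dist(x(t),\Fix T)\leq\kappa\|x(t)-T(x(t))\|^\gamma\quad\text{for all}~t\geq 0, $$
which, upon rearrangement, gives $\|x(t)-T(x(t))\|^2\geq \kappa^{-2/\gamma}\dist^2(x(t),\Fix T)^{1/\gamma}$.

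Next, I would substitute this into Lemma~\ref{l:ineqs}\eqref{l:ineqs:FixT} and use $\lambda(t)\geq\lambda^\ast$ to obtain
$$ \frac{d}{dt}\dist^2(x(t),\Fix T)\leq -\frac{\lambda^\ast}{\kappa^{2/\gamma}}\bigl(\dist^2(x(t),\Fix T)\bigr)^{1/\gamma}. $$
Setting $u(t):=\dist^2(x(t),\Fix T)$ and $\alpha:=\lambda^\ast/\kappa^{2/\gamma}$, this is exactly the hypothesis of the Bihari--LaSalle inequality, which provides a constant $M_1>0$ such that
$$ \dist^2(x(t),\Fix T) \leq M_1\,t^{-\frac{\gamma}{1-\gamma}}\quad\text{for all}~t>0. $$

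Finally, to pass from $\dist(x(t),\Fix T)$ to $\|x(t)-\bar{x}\|$, I would reuse the Fej\'er-type argument from the proof of Theorem~\ref{th:linear reg convergence}: Lemma~\ref{l:ineqs}\eqref{l:ineqs:xbar} shows $t\mapsto\|x(t)-x^\ast\|^2$ is nonincreasing for every $x^\ast\in\Fix T$, so setting $x^\ast:=P_{\Fix T}(x(t))$ and using weak lower semicontinuity of $\|\cdot\|$ together with $x(s)\wto\bar x$ yields
$$ \|x(t)-\bar{x}\| \leq \liminf_{s\to+\infty}\|x(t)-x(s)\| \leq 2\dist(x(t),\Fix T). $$
Combining this with the previous estimate gives the claimed rate with $M:=2\sqrt{M_1}$. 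The main technical step is simply checking the exponent arithmetic so that $1/\gamma$ in the ODE feeds correctly into Lemma~\ref{l:bls}; no real obstacle is expected since the proof structure of Theorem~\ref{th:linear reg convergence} transfers essentially verbatim.
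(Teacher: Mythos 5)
Your proposal is correct and follows essentially the same route as the paper's proof: weak convergence and boundedness from Theorem~\ref{th:botcsetnek}, the H\"older error bound fed into Lemma~\ref{l:ineqs}\eqref{l:ineqs:FixT}, the Bihari--LaSalle inequality applied to $t\mapsto\dist^2(x(t),\Fix T)$, and the Fej\'er-type estimate $\|x(t)-\bar{x}\|\leq 2\dist(x(t),\Fix T)$ from Theorem~\ref{th:linear reg convergence}. The exponent bookkeeping ($M:=2\sqrt{M_1}$, rate $t^{-\gamma/(2(1-\gamma))}$) also matches the paper.
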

\begin{proof}
By Theorem~\ref{th:botcsetnek}, $x(t)$ converges weakly to a point $\bar{x}\in\Fix T$. In particular, the trajectory $x$ is bounded and hence, as $T$ is boundedly H\"older regular, there exists $\kappa>0$ and $\gamma\in(0,1)$ such that
$$ \dist(x(t),\Fix T)\leq \kappa \|x(t)-T\bigl(x(t)\bigr)\|^\gamma. $$
Combining this with Lemma~\ref{l:ineqs}\eqref{l:ineqs:FixT} yields
  $$ \frac{d}{dt}\dist^2(x(t),\Fix T)  \leq -\lambda(t)\|x(t)-T(x(t))\|^2 \leq -\frac{\lambda^*}{\kappa^{2/\gamma}}\dist^{2/\gamma}(x(t),\Fix T). $$  
By applying the Bihari--LaSalle inequality (Lemma~\ref{l:bls}) to the function $t\mapsto\dist^2(x(t),\Fix T)$, we deduce the existence of a constant $M_0>0$ such that
  \begin{equation*}
  \dist(x(t),\Fix T)\leq M_0\,t^{-\frac{\gamma}{2(1-\gamma)}}. 
  \end{equation*} 
Let $x^*\in\Fix T$ be arbitrary. By using the same argument as used in Theorem~\ref{th:linear reg convergence} to obtain \eqref{eq:key2}, we deduce
  $$ \|x(t)-\bar{x}\| \leq 2\dist(x(t),\Fix T). $$
Combining the previous two inequalities gives
  $$ \|x(t)-\bar{x}\| \leq M\,t^{-\frac{\gamma}{2(1-\gamma)}}\text{~~where~~}M:=2M_0, $$
which completes the proof.  
\end{proof}

An interesting direction for further investigation would be to study convergence rates under regularity properties for $T$ for second order dynamical systems. Specially, given initial points $u_0,v_0\in\Hilbert$, it is natural to consider the system
\begin{equation}\label{dyn-syst-sec-order-nonexp}\left\{
\begin{array}{ll}
\ddot x(t) + \gamma\dot x(t) + \lambda(t)\big(x(t)-T(x(t))\big)=0\\
x(0)=u_0,\quad \dot x(0)=v_0
\end{array}\right.\end{equation}
where $\gamma>0$ and $\lambda>0$ is as considered above.

The motivation for studying \eqref{dyn-syst-sec-order-nonexp} is that its time discretisation leads to iterative schemes involving
inertial effects, which have had a great impact in the research community
due to the works of Polyak \cite{polyak}, Nesterov \cite{Nes83,nes}, etc. In the particular case when $\lambda(t)=1$ for all $t \in [0, +\infty)$, the dynamical system \eqref{dyn-syst-sec-order-nonexp} has been
investigated in \cite[Theorem 3.2]{att-alv} (see also \cite{b-c-sicon2016}).

\section{Further Properties of Regular Operators}\label{s:closure properties}
In this section, we study closure properties of the classes of boundedly linearly/H\"older regular operators under convex combinations and compositions. In order to establish these properties, we shall work with the following class which includes averaged nonexpansive operators as a special cases.
\begin{definition}[Strongly quasinonexpansive operators]
An operator $T\colon\Hilbert\to\Hilbert$ is \emph{$\rho$-strongly quasinonexpansive ($\rho$-SQNE)} if $\rho>0$ and 
  $$ \|T(x)-x^*\|^2 + \rho\|x-T(x)\|^2 \leq \|x-x^*\|^2 \quad\forall x\in\Hilbert,\,\forall x^*\in \Fix T. $$
\end{definition}
\begin{remark}
Although this paper will only use the results from this section applied to averaged-nonexpansive operators, our motivation for studying SQNE operators is two-fold. Firstly, as every averaged-nonexpansive operator is also SNQE, there is no loss of generality in considering SNQE operators. Secondly, many of the results in this section can be seen as extensions of those in \cite{CRZ2018} which considered SNQE operators. Thus it is natural for us to consider the same setting.  
\end{remark}

The fixed points of SQNE operators satisfy the following properties.
\begin{proposition}[{\cite[Theorem~2.1.26]{Ceg}}]
Let $T_i\colon\Hilbert\to\Hilbert$ be SQNE with a common fixed point. Then the identity
 $$ \Fix T=\cap_{i=1}^n\Fix T_i $$
holds provided that $T$ has one of the following forms:
\begin{enumerate}[(i)]
\item $T=\sum_{i=1}^n\omega_iT_i$ with $\sum_{i=1}^n\omega_i=1$ and $\omega_i>0$ for all $i\in\{1,\dots,n\}$.
\item $T=T_n\dots T_2T_1$
\end{enumerate}
\end{proposition}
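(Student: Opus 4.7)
My plan is to establish the two inclusions separately. The inclusion $\cap_{i=1}^n \Fix T_i \subseteq \Fix T$ is immediate from the formula defining $T$ in either case: if each $T_i$ fixes $z$, then the convex combination in (i) fixes $z$, and iterating shows the composition in (ii) fixes $z$ as well. So the substance of the proposition is the reverse inclusion $\Fix T \subseteq \cap_{i=1}^n \Fix T_i$, which is where the SQNE property is needed.

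For case (i), I would fix an arbitrary $x \in \Fix T$ and any common fixed point $z \in \cap_{i=1}^n \Fix T_i$ (which exists by hypothesis). Letting $\rho_i>0$ denote the SQNE constant of $T_i$, applying the SQNE inequality at $x$ with $x^* = z$ yields, for each $i$, a bound on $\|T_i(x)-z\|^2$ by $\|x-z\|^2 - \rho_i\|x-T_i(x)\|^2$. Combining this with the convexity of $\|\cdot\|^2$ applied to the identity $x - z = \sum_{i=1}^n\omega_i(T_i(x)-z)$ (valid since $x = T(x)$) should collapse to an inequality of the form $\sum_{i=1}^n \omega_i\rho_i\|x - T_i(x)\|^2 \leq 0$. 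Since each $\omega_i\rho_i > 0$, this forces $T_i(x) = x$ for every $i$.

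For case (ii), the strategy is a telescoping variant of the above. I would unroll the composition by setting $y_0 := x$ and $y_i := T_i(y_{i-1})$ for $i \in \{1,\dots,n\}$, so that $y_n = T(x) = x$. Applying the $\rho_i$-SQNE inequality at $y_{i-1}$ with $x^* = z$ gives $\|y_i - z\|^2 \leq \|y_{i-1}-z\|^2 - \rho_i \|y_{i-1}-y_i\|^2$. Summing over $i$ lets the $\|y_i-z\|^2$ terms telescope, and because $y_n = y_0$ the net contribution of the distances to $z$ vanishes. Hence $\sum_{i=1}^n \rho_i\|y_{i-1}-y_i\|^2 \leq 0$, which forces $y_i = y_{i-1}$ for all $i$. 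Unrolling from $i=1$ then gives $T_i(x) = x$ for every $i$.

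Neither direction is technically hard, and the only subtle point is making sure the SQNE constants are strictly positive so that equality in the aggregated estimate forces each summand, and hence $T_i(x) - x$, to be zero. Since positivity of $\rho_i$ is part of the definition of SQNE, no genuine obstacle arises; the proof is really a clean bookkeeping exercise exploiting the quadratic surplus $\rho\|x-Tx\|^2$ in the SQNE inequality.
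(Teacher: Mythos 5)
Your argument is correct. Note that the paper does not prove this proposition at all -- it is quoted directly from Cegielski's book (Theorem~2.1.26 there) -- so you are supplying a proof where the authors rely on a citation; your two-step argument (the trivial inclusion $\cap_{i=1}^n\Fix T_i\subseteq\Fix T$, then using the SQNE surplus $\rho_i\|x-T_i(x)\|^2$ at a common fixed point $z$, via convexity of $\|\cdot\|^2$ in case~(i) and telescoping along $y_i=T_i(y_{i-1})$ with $y_n=y_0$ in case~(ii)) is exactly the standard proof of this fact, and it is the same bookkeeping device the paper itself uses later in the proofs of Lemma~\ref{l:l09} and Lemma~\ref{l:l10}. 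The only point worth spelling out when writing it up is the final unrolling in case~(ii): from $\rho_i\|y_{i-1}-y_i\|^2=0$ you get $y_1=y_0=x$, hence $T_1(x)=x$, and then inductively $y_i=x$ and $T_i(x)=x$ for every $i$, which is precisely what you indicate.
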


We also require the following regularity notion for collections of sets. 
\begin{definition}[Linearly regular collections of sets]\label{d:lin reg inter}
A collection of sets $\{C_1,\dots,C_n\}$ is \emph{linearly regular} on $U$ if there exists $\tau>0$ such that
$$ \dist(x,\cap_{i=1}^n C_i)\leq \tau\max_{i=1,\dots,n}\dist(x, C_i)\quad \forall x\in U. $$
If the collection $\{C_1,\dots,C_n\}$ is linearly regular on every bounded subset of $\Hilbert$, it is said to be \emph{boundedly linearly regular}.
\end{definition}

\begin{theorem}[{\cite[Corollary~5.3]{CRZ2018}}]\label{th:linear reg comb}
Let $T_i\colon\Hilbert\to\Hilbert$ be $\rho_i$-SQNE for all $i\in\{1,\dots,n\}$. Assume $\min_{i=1,\dots,n}\omega_i>0$, $\sum_{i=1}^n\omega_i=1$ and $\cap_{i=1}^n\Fix T_i\neq\emptyset$. Suppose the following assertions hold.
\begin{enumerate}[(i)]
\item The operator $T_i$ is boundedly linearly regular for all $i\in\{1,\dots,n\}$.
\item The collection $\{\Fix T_i\}_{i=1}^n$ is boundedly linearly regular. 
\end{enumerate}
Then $T:=\sum_{i=1}^n\omega_i T_i$ is boundedly linearly regular.
\end{theorem}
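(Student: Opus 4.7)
The plan is to establish the inequality chain
$$\dist(y,\Fix T) \;\xrightarrow{\text{collection reg.}}\; \max_i\dist(y,\Fix T_i) \;\xrightarrow{\text{reg. of each }T_i}\; \max_i \|y-T_i(y)\| \;\xrightarrow{\text{SQNE + convex comb.}}\; \|y-T(y)\|,$$
and then resolve a self-referential power of $\dist(y,\Fix T)$ that appears on the right-hand side.

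First I would note that the hypotheses of the proposition cited above apply, so $\Fix T = \cap_{i=1}^n \Fix T_i$. I then fix an arbitrary nonempty bounded set $U\subseteq\Hilbert$ and, by the bounded linear regularity assumptions, obtain constants $\kappa_i>0$ such that $\dist(y,\Fix T_i)\le\kappa_i\|y-T_i(y)\|$ for all $y\in U$, and $\tau>0$ such that $\dist(y,\Fix T)\le\tau\max_i\dist(y,\Fix T_i)$ for all $y\in U$.

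The crucial step, which I expect to be the main work, is to bound each individual residual $\|y-T_i(y)\|$ by $\|y-T(y)\|$. Given any $x^*\in\Fix T\subseteq\Fix T_i$, the $\rho_i$-SQNE property of $T_i$ gives $\|T_i(y)-x^*\|^2+\rho_i\|y-T_i(y)\|^2\le\|y-x^*\|^2$. Taking a convex combination in $i$, using convexity of $\|\cdot\|^2$ to estimate $\sum_i\omega_i\|T_i(y)-x^*\|^2\ge\|T(y)-x^*\|^2$, and then expanding $\|T(y)-x^*\|^2$ as $\|y-x^*\|^2-2\langle y-x^*,y-T(y)\rangle+\|y-T(y)\|^2$, the $\|y-x^*\|^2$ terms cancel and I obtain
$$\sum_{i=1}^n \omega_i\rho_i\|y-T_i(y)\|^2 \;\le\; 2\langle y-x^*,y-T(y)\rangle - \|y-T(y)\|^2 \;\le\; 2\|y-x^*\|\,\|y-T(y)\|.$$
Setting $\rho:=\min_i\omega_i\rho_i>0$ and dropping all but the $j$-th term yields $\|y-T_j(y)\|^2\le (2/\rho)\|y-x^*\|\,\|y-T(y)\|$ for each $j$. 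Choosing $x^*=P_{\Fix T}(y)$ then gives
$$\|y-T_j(y)\|^2 \;\le\; \frac{2}{\rho}\,\dist(y,\Fix T)\,\|y-T(y)\|.$$

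Finally I would chain everything: for $y\in U$,
$$\dist(y,\Fix T) \;\le\; \tau\bigl(\max_i\kappa_i\bigr)\max_j\|y-T_j(y)\| \;\le\; \tau\bigl(\max_i\kappa_i\bigr)\sqrt{\tfrac{2}{\rho}\,\dist(y,\Fix T)\,\|y-T(y)\|}.$$
Squaring both sides and dividing by $\dist(y,\Fix T)$ (trivially handling the case $\dist(y,\Fix T)=0$ separately) produces
$$\dist(y,\Fix T) \;\le\; \frac{2\tau^2(\max_i\kappa_i)^2}{\rho}\,\|y-T(y)\| \quad\forall y\in U,$$
which establishes bounded linear regularity of $T$ with an explicit constant. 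The only conceptual obstacle is the self-referential appearance of $\dist(y,\Fix T)$ on both sides, which is handled by the square-root cancellation above; the rest is routine bookkeeping.
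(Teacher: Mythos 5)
Your proof is correct. Note that the paper itself does not prove this statement: it is imported verbatim from Cegielski, Reich and Zalas \cite[Corollary~5.3]{CRZ2018}, so there is no in-paper argument to match against. What you have written is, in effect, a self-contained proof that follows exactly the strategy the paper uses for its H\"older analogue: your ``crucial step'' inequality $\sum_{i=1}^n\omega_i\rho_i\|y-T_i(y)\|^2\le 2\,\dist(y,\Fix T)\,\|y-T(y)\|$ is precisely Lemma~\ref{l:l09} (derived there by the same SQNE-plus-Cauchy--Schwarz computation, with $z=P_{\Fix T}(y)$), and your chaining of collection regularity, regularity of each $T_i$, and this lemma is the $\gamma=1$ specialization of the proof of Theorem~\ref{th:hoelder reg comb}, where the ``self-referential'' power of $\dist(y,\Fix T)$ is cancelled in the same way. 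Two cosmetic remarks: you implicitly use $\Fix T=\cap_{i=1}^n\Fix T_i$ (correctly justified via the SQNE fixed-point proposition, which applies since a common fixed point exists), and your choice $x^*=P_{\Fix T}(y)$ presumes the projection exists; if one prefers not to argue that $\Fix T$ is closed and convex, the same bound follows by taking $x^*\in\Fix T$ with $\|y-x^*\|\le\dist(y,\Fix T)+\varepsilon$ and letting $\varepsilon\to0$ (the paper's Lemma~\ref{l:l09} makes the same projection choice). The explicit constant $2\tau^2(\max_i\kappa_i)^2/\rho$ with $\rho=\min_i\omega_i\rho_i$ is a small bonus of your route.
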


\begin{theorem}[{\cite[Corollary~5.6]{CRZ2018}}]\label{th:linear reg comp}
Let $T_i\colon\Hilbert\to\Hilbert$ be $\rho_i$-SQNE for all $i\in\{1,\dots,n\}$. Assume $\cap_{i=1}^n\Fix T_i\neq\emptyset$ and denote $U:=\mathbb{B}(z,R)$ where $z\in\cap_{i=1}^n\Fix T_i$ and $R>0$. 
Suppose the following assertions hold.
\begin{enumerate}[(i)]
\item The operator $T_i$ is linearly regular on $U$ for all $i\in\{1,\dots,n\}$.
\item The collection $\{\Fix T_i\}_{i=1}^n$ is linearly regular on $U$.
\end{enumerate}
Then $T:=T_n\dots T_2T_1$ is linearly regular on $U$.
\end{theorem}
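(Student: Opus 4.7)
The plan is to control $\dist(x,\Fix T)=\dist(x,\cap_{i=1}^n\Fix T_i)$ by $\|x-Tx\|$ via a chain of intermediate iterates that breaks the composition into its individual factors. First, for $x\in U$ I define $y_0:=x$ and $y_i:=T_iy_{i-1}$ for $i=1,\ldots,n$, so that $y_n=Tx$. Since each $T_i$ is $\rho_i$-SQNE and $z\in\cap_i\Fix T_i$, quasinonexpansivity yields
$$\|y_i-z\|\leq\|y_{i-1}-z\|\leq\cdots\leq\|x-z\|\leq R,$$
so every iterate $y_i$ lies in $U=\mathbb{B}(z,R)$. This is crucial because both regularity hypotheses are only assumed on $U$, and this step is what lets me invoke them at each of the $y_{i-1}$ rather than just at $x$.

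Next, the individual regularity hypothesis furnishes $\kappa_i>0$ with $\dist(y_{i-1},\Fix T_i)\leq\kappa_i\|y_{i-1}-y_i\|$. A triangle inequality combined with $\|x-y_{i-1}\|\leq\sum_{j<i}\|y_{j-1}-y_j\|$ promotes this into $\dist(x,\Fix T_i)\leq C_i\sum_{j\leq i}\|y_{j-1}-y_j\|$. Linear regularity of the collection $\{\Fix T_i\}_{i=1}^n$ on $U$ then furnishes $\tau>0$ with $\dist(x,\Fix T)\leq\tau\max_i\dist(x,\Fix T_i)$, so that overall
$$\dist(x,\Fix T)\leq C\sum_{i=1}^n\|y_{i-1}-y_i\|.$$
This reduces the theorem to bounding $\sum_i\|y_{i-1}-y_i\|$ linearly by $\|x-Tx\|$. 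The natural SQNE tool is the telescoping estimate $\sum_i\rho_i\|y_{i-1}-y_i\|^2\leq\|x-z\|^2-\|Tx-z\|^2$, sharpened via $\|x-z\|^2-\|Tx-z\|^2=\langle x-Tx,\,x+Tx-2z\rangle\leq 2R\|x-Tx\|$.

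The main obstacle is precisely the last step: the bound just described only yields $\sum_i\|y_{i-1}-y_i\|\lesssim\sqrt{\|x-Tx\|}$, a H\"older $\tfrac{1}{2}$ rate rather than the linear rate required. Bridging this gap is the technical heart of the argument in \cite{CRZ2018}: both regularity hypotheses have to be used jointly, exploiting the transversality-type condition imposed by linear regularity of $\{\Fix T_i\}$ to preclude the cancellations in the telescoping chain that would otherwise let some intermediate residual $\|y_{i-1}-y_i\|$ dwarf $\|x-Tx\|$. Once such a cancellation-free estimate $\max_i\|y_{i-1}-y_i\|\lesssim\|x-Tx\|$ is established (proceeding, for instance, by induction on $n$ after verifying the two-operator case), the preceding chain of inequalities produces the required constant $\kappa>0$ with $\dist(x,\Fix T)\leq\kappa\|x-Tx\|$ for all $x\in U$, which is exactly the linear regularity of $T$ on $U$.
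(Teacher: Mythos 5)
Your first half is sound: the intermediate iterates $Q_i(x)=T_i\cdots T_1(x)$ stay in $U$ by quasinonexpansivity, the triangle inequality plus linear regularity of each $T_i$ on $U$ gives $\dist(x,\Fix T_i)\leq C\sum_{j=1}^n\|Q_{j-1}(x)-Q_j(x)\|$, and linear regularity of the collection $\{\Fix T_i\}$ then yields $\dist(x,\Fix T)\leq C'\sum_{i=1}^n\|Q_{i-1}(x)-Q_i(x)\|$. But the proof is not complete: you explicitly defer the decisive step to an unproven claim $\max_i\|Q_{i-1}(x)-Q_i(x)\|\lesssim\|x-T(x)\|$, with only a gesture at ``induction on $n$''. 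That claim is essentially equivalent to the conclusion itself --- a posteriori it follows \emph{from} linear regularity of $T$ (apply the SQNE inequality at $x^*=P_{\Fix T}(x)$), so attempting to prove it first is circular in spirit, and no argument for the two-operator base case is given. As it stands, your telescoping estimate with the crude bound $\langle x-T(x),x+T(x)-2z\rangle\leq 2R\|x-T(x)\|$ only delivers the H\"older-$\tfrac12$ rate you acknowledge.

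The missing idea is not a cancellation-free bound on the individual increments but a sharper choice of reference point in the Fej\'er estimate: take $z=P_{\Fix T}(x)$ instead of the ball centre, which gives
\begin{equation*}
\sum_{i=1}^n\rho_i\|Q_{i-1}(x)-Q_i(x)\|^2 \;\leq\; \|x-z\|^2-\|T(x)-z\|^2 \;\leq\; 2\dist(x,\Fix T)\,\|x-T(x)\|,
\end{equation*}
exactly as in Lemma~\ref{l:l10}. Squaring your bound $\dist(x,\Fix T)\leq C'\sum_i\|Q_{i-1}(x)-Q_i(x)\|$ and using Cauchy--Schwarz to pass to the sum of squares then gives $\dist^2(x,\Fix T)\leq C''\dist(x,\Fix T)\|x-T(x)\|$, and dividing by $\dist(x,\Fix T)$ (trivial when it vanishes) yields $\dist(x,\Fix T)\leq C''\|x-T(x)\|$ on $U$. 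The retained factor $\dist(x,\Fix T)$, which your estimate $2R\|x-T(x)\|$ throws away, is precisely what upgrades the square-root rate to a linear one. This is the scheme of \cite[Corollary~5.6]{CRZ2018}, and it is displayed in this paper in the H\"older setting: compare the proof of Theorem~\ref{th:hoelder reg comp}, which runs your first half verbatim and closes via Lemma~\ref{l:l10} in exactly this way.
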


The following is immediate from the definitions.
\begin{corollary}\label{cor:linear reg comp}
Let $T_i\colon\Hilbert\to\Hilbert$ be $\rho_i$-SQNE for all $i\in\{1,\dots,n\}$. Assume $\cap_{i=1}^n\Fix T_i\neq\emptyset$. Suppose the following assertions hold.
\begin{enumerate}[(i)]
\item The operator $T_i$ is boundedly linearly regular for all $i\in\{1,\dots,n\}$.
\item The collection $\{\Fix T_i\}_{i=1}^n$ is boundedly linearly regular.
\end{enumerate}
Then $T:=T_n\dots T_2T_1$ is boundedly linearly regular.
\end{corollary}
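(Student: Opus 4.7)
The plan is to reduce the bounded linear regularity of $T$ to checking linear regularity on balls centered at a single fixed point, after which Theorem~\ref{th:linear reg comp} applies essentially verbatim. The key enabler is Remark~\ref{re:bounded reg equiv}, which says that for any $z\in\Hilbert$, an operator is boundedly linearly regular if and only if it is linearly regular on $\mathbb{B}(z,R)$ for every $R>0$. This lets me match the shape of the hypothesis in Theorem~\ref{th:linear reg comp}, which requires a ball centered at a common fixed point rather than an arbitrary bounded set.

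First I would pick some $z\in\cap_{i=1}^n\Fix T_i$, which exists by assumption, and fix an arbitrary $R>0$, setting $U:=\mathbb{B}(z,R)$. Next I would verify the hypotheses of Theorem~\ref{th:linear reg comp} on this $U$: since each $T_i$ is boundedly linearly regular by assumption, it is in particular linearly regular on the bounded set $U$, and since the collection $\{\Fix T_i\}_{i=1}^n$ is boundedly linearly regular, it is in particular linearly regular on $U$. Both conditions of Theorem~\ref{th:linear reg comp} are therefore met.

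Applying Theorem~\ref{th:linear reg comp} then yields that $T=T_n\dots T_2T_1$ is linearly regular on $U=\mathbb{B}(z,R)$. Since $R>0$ was arbitrary, Remark~\ref{re:bounded reg equiv} delivers the conclusion that $T$ is boundedly linearly regular.

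There is no real obstacle here; the only point that requires any thought at all is the mismatch between the conclusion of Theorem~\ref{th:linear reg comp} (linear regularity only on balls centered at common fixed points) and the definition of bounded linear regularity (on every bounded set). This mismatch is precisely what Remark~\ref{re:bounded reg equiv} is designed to bridge, which is why the authors flag the result as immediate.
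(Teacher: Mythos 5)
Your proof is correct and follows exactly the paper's route: the paper's proof is the one-liner "combine Theorem~\ref{th:linear reg comp} with Remark~\ref{re:bounded reg equiv}," and your write-up simply makes the details explicit (fixing $z\in\cap_{i=1}^n\Fix T_i$, applying the theorem on each ball $\mathbb{B}(z,R)$, then invoking the remark). Nothing is missing.
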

\begin{proof}
Follows by combining Theorem~\ref{th:linear reg comp} with Remark~\ref{re:bounded reg equiv}.
\end{proof}

The following regularity notion is the H\"older analogue of Definition~\ref{d:lin reg inter}.

\begin{definition}[H\"older regular collections of sets]
A collection of sets $\{C_1,\dots,C_n\}$ is \emph{H\"older regular} on $U$ if there exists $\tau>0$ and $\theta\in(0,1)$ such that
$$ \dist(x,\cap_{i=1}^n C_i)\leq \tau\max_{i=1,\dots,n}\dist(x, C_i)^\theta\quad \forall x\in U. $$
If the collection $\{C_1,\dots,C_n\}$ is H\"older regular on every bounded subset of $\Hilbert$, it is said to be \emph{boundedly H\"older regular}.
\end{definition}

\begin{lemma}\label{l:hoelder trick}
Let $0<\gamma\leq\theta$ and $b>0$. There exists $M>0$ such that
 $\alpha^\theta \leq M \alpha^\gamma$ for all $\alpha\in[0,b]$.
\end{lemma}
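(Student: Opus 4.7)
The plan is to factor $\alpha^\theta=\alpha^\gamma\cdot\alpha^{\theta-\gamma}$ and then bound the second factor uniformly over $[0,b]$. Since $0<\gamma\leq\theta$, the exponent $\theta-\gamma$ is nonnegative, so the map $\alpha\mapsto\alpha^{\theta-\gamma}$ is non-decreasing on $[0,b]$ (with the convention $0^0=1$ if equality holds, which only affects the trivial case $\alpha=0$).

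Specifically, I would split into two cases. If $\theta=\gamma$, the inequality $\alpha^\theta\leq M\alpha^\gamma$ holds with $M=1$. If $\theta>\gamma$, then for any $\alpha\in(0,b]$ we have $\alpha^{\theta-\gamma}\leq b^{\theta-\gamma}$, and so
\[
\alpha^\theta=\alpha^\gamma\cdot\alpha^{\theta-\gamma}\leq b^{\theta-\gamma}\alpha^\gamma.
\]
For $\alpha=0$, both sides of the desired inequality are zero (since $\gamma,\theta>0$), so the bound holds trivially. Setting $M:=\max\{1,b^{\theta-\gamma}\}$ (or simply $M:=b^{\theta-\gamma}$ in the strict case) therefore yields the claim.

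There is essentially no obstacle here; the lemma is purely a scalar estimate. The only minor subtlety is ensuring the boundary point $\alpha=0$ is handled correctly, which is immediate. I would keep the proof to a few lines and avoid overcomplicating the case analysis.
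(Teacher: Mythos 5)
Your proof is correct and is essentially the paper's own argument: factor $\alpha^\theta=\alpha^{\theta-\gamma}\alpha^\gamma$ and bound $\alpha^{\theta-\gamma}\leq b^{\theta-\gamma}$ on $[0,b]$, yielding $M=b^{\theta-\gamma}$. Your explicit handling of the edge cases $\theta=\gamma$ and $\alpha=0$ is a minor tidying of points the paper passes over silently, not a different approach.
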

\begin{proof}
Since $\theta-\gamma>0$ by assumption, we have $\alpha^{\theta-\gamma}\leq b^{\theta-\gamma}$. 
Thus, for all $\alpha\in[0,b]$,  we have $\alpha^\theta = \alpha^{\theta-\gamma}\alpha^\gamma \leq M\alpha^\gamma$ with $M=b^{\theta-\gamma}$.
\end{proof}

The following lemma is due to Cegielski \& Zalas \cite{CZ}. Since we need a slightly different version result to one which appears in \cite[Proposition~4.5]{CZ}, we include its proof.
\begin{lemma}\label{l:l09}
Suppose $T_i\colon\Hilbert\to\Hilbert$ is $\rho_i$-SQNE for all $i\in\{1,\dots,n\}$ and denote $T:=\sum_{i=1}^n\omega_iT_i$ where $\sum_{i=1}^n\omega_i=1$ and $\omega_i>0$ for all $i\in\{1,\dots,n\}$. 
Assume that $\Fix T=\cap_{i=1}^n\Fix T_i\neq\emptyset$. Then
 \begin{equation*}
 \sum_{i=1}^n\omega_i\rho_i\|x-T_i(x)\|^2 \leq 2\dist(x,\Fix T)\|x-T(x)\|\quad\forall x\in\Hilbert.
 \end{equation*}
\end{lemma}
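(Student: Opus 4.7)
The plan is to start from an arbitrary $x^*\in\Fix T=\cap_{i=1}^n\Fix T_i$ (which is nonempty by hypothesis) and use the defining SQNE inequality of each $T_i$ in the form
$$\rho_i\|x-T_i(x)\|^2 \leq \|x-x^*\|^2 - \|T_i(x)-x^*\|^2.$$
Multiplying by $\omega_i$ and summing over $i$, the left-hand side becomes exactly the quantity we want to bound, while on the right I use $\sum_i\omega_i=1$ together with the convexity of $\|\cdot\|^2$ (applied to the convex combination $T(x)-x^*=\sum_i\omega_i(T_i(x)-x^*)$) to deduce
$$\sum_{i=1}^n\omega_i\rho_i\|x-T_i(x)\|^2 \leq \|x-x^*\|^2 - \|T(x)-x^*\|^2.$$

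Next I would rewrite the right-hand side using the polarisation-type identity
$$\|x-x^*\|^2 - \|T(x)-x^*\|^2 = 2\langle x-T(x),\,x-x^*\rangle - \|x-T(x)\|^2,$$
which is a direct expansion of the inner products. Applying the Cauchy--Schwarz inequality to the cross term and discarding the nonpositive $-\|x-T(x)\|^2$ contribution yields
$$\sum_{i=1}^n\omega_i\rho_i\|x-T_i(x)\|^2 \leq 2\|x-T(x)\|\,\|x-x^*\|.$$

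Since $x^*\in\Fix T$ was arbitrary and $\Fix T$ is nonempty closed convex (as an intersection of fixed point sets of SQNE operators), I conclude by choosing $x^*:=P_{\Fix T}(x)$ so that $\|x-x^*\|=\dist(x,\Fix T)$, yielding the claimed inequality. There is no real obstacle here: the proof is essentially a chain of the SQNE inequality, Jensen for $\|\cdot\|^2$, an algebraic identity, and Cauchy--Schwarz, with the final infimum over $x^*$ converting $\|x-x^*\|$ into $\dist(x,\Fix T)$. The only subtlety worth verifying is that the convexity step gives the correct direction of inequality, which it does because $\|\cdot\|^2$ is convex and $\sum_i\omega_i=1$ with $\omega_i>0$.
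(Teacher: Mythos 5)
Your proof is correct and follows essentially the same route as the paper's: the SQNE inequality at a point of $\Fix T$, convexity of $\|\cdot\|^2$ to pass from $\sum_i\omega_i\|T_i(x)-x^*\|^2$ to $\|T(x)-x^*\|^2$, the expansion of $\|x-x^*\|^2-\|T(x)-x^*\|^2$, Cauchy--Schwarz, and finally $x^*=P_{\Fix T}(x)$ to produce $\dist(x,\Fix T)$. The only (harmless) cosmetic difference is that you keep $x^*$ arbitrary until the end, whereas the paper fixes $z=P_{\Fix T}(x)$ from the outset.
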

\begin{proof}
Let $z=P_{\Fix T}(x)$. Since $T_i$ is $\rho_i$-SQNE, we have
\begin{align*}
  \|T(x)-z\|^2 \leq \sum_{i=1}^n\omega_i\|T_i(x)-z\|^2 \leq \|x-z\|^2 - \sum_{i=1}^n\omega_i\rho_i\|x-T_i(x)\|^2.
\end{align*}
Using the Cauchy--Schwarz inequality, we deduce
\begin{align*}
\|T(x)-z\|^2 
  &= \|T(x)-x\|^2 + \|x-z\|^2 + 2\langle T(x)-x,x-z\rangle \\
  &\geq \|T(x)-x\|^2 + \|x-z\|^2 - 2\dist(x,\Fix T)\|T(x)-x\|.
\end{align*}
The claimed result follows by combining the previous two inequalities.
\end{proof}

\begin{theorem}\label{th:hoelder reg comb}
Let $T_i\colon\Hilbert\to\Hilbert$ be $\rho_i$-SQNE for all $i\in\{1,\dots,n\}$ and assume $\Fix T=\cap_{i=1}^n\Fix T_i\neq\emptyset$. Suppose that the following assertions hold.
\begin{enumerate}[(i)]
\item The operator $T_i$ is boundedly H\"older regular.
\item The collection $\{\Fix T_i\}_{i=1}^n$ is boundedly H\"older regular.
\end{enumerate}
Then $T:=\sum_{i=1}^n\omega_iT_i$ is boundedly H\"older regular whenever $\sum_{i=1}^n\omega_i=1$ and $\omega_i>0$ for all $i\in\{1,\dots,n\}$.
\end{theorem}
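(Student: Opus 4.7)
The plan is to mimic the structure of the linear regularity case (Theorem~\ref{th:linear reg comb}), but paying attention to the interplay between the various Hölder exponents, which is the main new technical complication.

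First, fix a bounded set $U \subseteq \Hilbert$. By Remark~\ref{re:bounded reg equiv}, we may assume $U = \mathbb{B}(z,R)$ for some $z \in \Fix T$ and $R > 0$. Since each $T_i$ is (quasi)nonexpansive and $U$ is bounded, the quantities $\|x - T_i(x)\|$ remain bounded for $x \in U$. By the hypotheses, there exist constants $\kappa_i > 0$, $\gamma_i \in (0,1)$, $\tau > 0$ and $\theta \in (0,1)$ such that, for all $x \in U$,
\begin{equation*}
\dist(x,\Fix T_i) \leq \kappa_i \|x - T_i(x)\|^{\gamma_i} \text{ and } \dist(x,\Fix T) \leq \tau\max_i\dist(x,\Fix T_i)^\theta.
\end{equation*}
Setting $\gamma_0 := \min_i \gamma_i$ and applying Lemma~\ref{l:hoelder trick} on the bounded range of $\|x - T_i(x)\|$ to unify the exponents to the common value $\gamma_0\theta$, we obtain a constant $C_1 > 0$ such that
\begin{equation*}
\dist(x,\Fix T) \leq C_1\max_{i}\|x-T_i(x)\|^{\gamma_0\theta}\quad\forall x\in U.
\end{equation*}

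Next, I would invoke Lemma~\ref{l:l09}: for each $i$,
\begin{equation*}
\omega_i\rho_i\|x-T_i(x)\|^2 \leq \sum_{j=1}^n\omega_j\rho_j\|x-T_j(x)\|^2 \leq 2\dist(x,\Fix T)\|x-T(x)\|.
\end{equation*}
Taking square roots and then the maximum over $i$ yields a constant $C_2 > 0$ with
\begin{equation*}
\max_i\|x-T_i(x)\| \leq C_2\,\dist(x,\Fix T)^{1/2}\|x-T(x)\|^{1/2}\quad\forall x\in\Hilbert.
\end{equation*}

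Finally, raising this inequality to the power $\gamma_0\theta$ and substituting into the previous bound for $\dist(x,\Fix T)$ gives a constant $C_3 > 0$ such that
\begin{equation*}
\dist(x,\Fix T) \leq C_3\,\dist(x,\Fix T)^{\gamma_0\theta/2}\|x-T(x)\|^{\gamma_0\theta/2}\quad\forall x\in U.
\end{equation*}
Since $\gamma_0\theta \in (0,1)$, we have $1 - \gamma_0\theta/2 > 0$, and rearranging leads to
\begin{equation*}
\dist(x,\Fix T) \leq C_3^{\frac{2}{2-\gamma_0\theta}}\|x-T(x)\|^{\gamma}\quad\forall x\in U, \text{ where } \gamma := \tfrac{\gamma_0\theta}{2-\gamma_0\theta}\in(0,1).
\end{equation*}
Since $U$ was an arbitrary bounded set, this establishes that $T$ is boundedly Hölder regular.

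The main obstacle is the bookkeeping around the Hölder exponents: the operators have (possibly) distinct exponents $\gamma_i$, the collection of fixed-point sets contributes its own exponent $\theta$, and the final rearrangement produces yet a third exponent $\gamma$. Lemma~\ref{l:hoelder trick} is the technical device that collapses the $\gamma_i$'s to a common value on the bounded set; without it, one cannot take $\max_i$ inside the Hölder bound cleanly. Everything else follows the linear template, with the square-root split from Lemma~\ref{l:l09} replacing the direct bound used in the linear case.
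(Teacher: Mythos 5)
Your proof is correct and follows essentially the same route as the paper: both rely on Lemma~\ref{l:hoelder trick} to reduce to a common exponent and on Lemma~\ref{l:l09} to bound the individual residuals by $\dist(x,\Fix T)\,\|x-T(x)\|$, and both conclude with the same rearrangement yielding the exponent $\frac{\gamma\theta}{2-\gamma\theta}$. The only cosmetic difference is that you take square roots and a maximum over $i$ where the paper works with the weighted sum and the convexity of $t\mapsto t^{2/\gamma}$.
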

\begin{proof}
Let $U$ be a nonempty bounded set. Since $T_i$ is boundedly H\"older regular, there exists constants $\kappa_i>0$ and $\gamma_i\in(0,1)$ such that
\begin{equation*}
  \dist(x,\Fix T_i) \leq \kappa_i\|x-T_i(x)\|^{\gamma_i}\quad\forall x\in U. 
\end{equation*}
Denote $\gamma=\min_{i=1,\dots,n}\gamma_i\in(0,1)$. Since $U$ is bounded and $\gamma\leq\gamma_i$, Lemma~\ref{l:hoelder trick} implies the existence of constants $M_i>0$ such that
$$ \|x-T_i(x)\|^{\gamma_i}\leq M_i\|x-T_i(x)\|^{\gamma}\quad\forall x\in U. $$
Denote $\kappa=\max_{i=1,\dots,n}\kappa_iM_i$. Then combining the previous two inequalities gives
\begin{equation*}
   \dist(x,\Fix T_i) \leq \kappa_iM_i\|x-T_i(x)\|^{\gamma}\leq \kappa\|x-T_i(x)\|^{\gamma} \quad\forall x\in U.
\end{equation*}
Let $x\in U$ and $z\in\Fix T$. Set $\omega=\min_{j=1,\dots,n}\omega_j$ and set $\rho=\min_{j=1,\dots,n}\rho_j$. Then
 $$ \omega_i\dist(x,\Fix T_i) \leq \sum_{j=1}^n\omega_j\dist(x,\Fix T_j) \leq \kappa\sum_{j=1}^n\omega_j\|x-T_jx\|^\gamma, $$
and so convexity of $t\mapsto t^{2/\gamma}$ together with Lemma~\ref{l:l09} implies
\begin{equation}\label{eq:key hoelder 0}
 \omega^{2/\gamma}\dist^{2/\gamma}(x,\Fix T_i) \leq \kappa^{2/\gamma}\sum_{j=1}^n\omega_j\|x-T_jx\|^2 \leq \frac{2\kappa^{2/\gamma}}{\rho}\dist(x,\Fix T)\|x-T(x)\|. 
\end{equation}
Thus, using the fact that the collection $\{\Fix T_i\}_{i=1}^n$ is H\"older regular on $U$ together with \eqref{eq:key hoelder 0}, we deduce the existence of a $\tau>0$ and a $\theta\in(0,1)$ such that
  $$ \dist^{\frac{2}{\gamma\theta}}(x,\Fix T) \leq \tau^{\frac{2}{\gamma\theta}}\max_{i=1,\dots,n}\dist^{2/\gamma}(x,\Fix T_i) \leq \tau^{\frac{2}{\gamma\theta}}\frac{2\kappa^{2/\gamma}}{\rho \omega^{2/\gamma}}\dist(x,\Fix T)\|x-T(x)\|, $$
from which the result follows.    
\end{proof}

The following lemma is due to Cegielski \& Zalas \cite{CZ}. Since we need a slightly different version result to one which appears in \cite[Proposition~4.6]{CZ}, we include its proof.
\begin{lemma}\label{l:l10}
Suppose $T_i\colon\Hilbert\to\Hilbert$ is $\rho_i$-SQNE for all $i\in\{1,\dots,n\}$ and denote $T:=T_n\dots T_2T_1$. Assume that $\Fix T=\cap_{i=1}^n\Fix T_i\neq\emptyset$. Then
 \begin{equation*}
 \sum_{i=1}^n\rho_i\|Q_{i-1}(x)-Q_i(x)\|^2 \leq 2\dist(x,\Fix T)\|x-T(x)\|\quad\forall x\in\Hilbert,
 \end{equation*}
where we denote $Q_0:=\Id$ and $Q_i:=T_i\dots T_1$ for all $i\in\{1,\dots,n\}$.
\end{lemma}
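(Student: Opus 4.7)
The plan is to mirror the strategy used in the proof of Lemma~\ref{l:l09}, replacing the convexity step by a telescoping argument that exploits the fact that the common fixed point of the $T_i$ is also fixed by each factor of the composition.

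First, I would set $z := P_{\Fix T}(x)$, so that $z \in \Fix T = \cap_{i=1}^n \Fix T_i$ and hence $T_i(z)=z$ for every $i$. Applying the $\rho_i$-SQNE property of $T_i$ at the point $Q_{i-1}(x)$ with the fixed point $z$, and noting that $T_i(Q_{i-1}(x)) = Q_i(x)$, yields
\begin{equation*}
\|Q_i(x)-z\|^2 + \rho_i\|Q_{i-1}(x)-Q_i(x)\|^2 \leq \|Q_{i-1}(x)-z\|^2.
\end{equation*}

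Next I would sum these $n$ inequalities over $i=1,\dots,n$. The left- and right-hand norms $\|Q_i(x)-z\|^2$ telescope, leaving
\begin{equation*}
\|T(x)-z\|^2 + \sum_{i=1}^n\rho_i\|Q_{i-1}(x)-Q_i(x)\|^2 \leq \|x-z\|^2,
\end{equation*}
since $Q_n = T$ and $Q_0=\Id$.

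Finally, I would estimate $\|T(x)-z\|^2$ from below exactly as in the proof of Lemma~\ref{l:l09}: expanding $\|T(x)-z\|^2 = \|T(x)-x\|^2 + \|x-z\|^2 + 2\langle T(x)-x, x-z\rangle$ and applying the Cauchy--Schwarz inequality together with $\|x-z\|=\dist(x,\Fix T)$ gives
\begin{equation*}
\|T(x)-z\|^2 \geq \|T(x)-x\|^2 + \|x-z\|^2 - 2\dist(x,\Fix T)\|x-T(x)\|.
\end{equation*}
Substituting this lower bound into the telescoped inequality and dropping the nonnegative term $\|T(x)-x\|^2$ yields the claim.

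I do not anticipate any real obstacle: the only thing to be careful about is identifying $T_i(Q_{i-1}(x))$ with $Q_i(x)$ so that the SQNE inequalities chain cleanly, and verifying that $z\in\Fix T_i$ for every $i$, which is precisely the content of the assumption $\Fix T = \cap_{i=1}^n \Fix T_i$. The rest is a direct parallel to the convex-combination case already handled.
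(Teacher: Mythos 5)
Your proposal is correct and follows essentially the same route as the paper's proof: chain the $\rho_i$-SQNE inequalities along the composition (the telescoping you make explicit is stated in one line in the paper), then bound $\|T(x)-z\|^2$ from below via the expansion and Cauchy--Schwarz, and combine. No gaps.
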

\begin{proof}
Let $z=P_{\Fix T}(x)$. Since $T_i$ is $\rho_i$-SQNE, we have
$$ \|T(x)-z\|^2 \leq \|x-z\|^2 - \sum_{i=1}^n\rho_i\|Q_{i-1}(x)-Q_i(x)\|^2. $$
Using the Cauchy--Schwarz inequality, we have
\begin{align*}
\|T(x)-z\|^2 
  &= \|T(x)-x\|^2 + \|x-z\|^2 + 2\langle T(x)-x,x-z\rangle \\
  &\geq \|T(x)-x\|^2 + \|x-z\|^2 - 2\dist(x,\Fix T)\|T(x)-x\|.
\end{align*}
The claimed result follows by combining the previous two inequalities.
\end{proof}

\begin{theorem}\label{th:hoelder reg comp}
Let $T_i\colon\Hilbert\to\Hilbert$ be $\rho_i$-SQNE for $i\in\{1,\dots,n\}$ and let $T:=T_n\dots T_2 T_1$. Assume that $\Fix T=\cap_{i=1}^n\Fix T_i\neq\emptyset$ and denote $U=\mathbb{B}(z,R)$ for some $z\in\Fix T$ and $R>0$. Suppose that the following assertions hold.
\begin{enumerate}[(i)]
\item The operator $T_i$ is H\"older regular on $U$ for all $i\in\{1,\dots,n\}$.
\item The collection $\{\Fix T_i\}_{i=1}^n$ is H\"older regular on $U$.
\end{enumerate}
Then $T$ is H\"older regular on $U$.
\end{theorem}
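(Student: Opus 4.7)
The plan is to closely follow the structure of the proof of Theorem~\ref{th:hoelder reg comb}, with Lemma~\ref{l:l10} playing the role that Lemma~\ref{l:l09} played there, and with an extra ``transport'' step to handle the fact that the H\"older regularity of $T_i$ naturally furnishes an estimate of the residual at the intermediate iterate $Q_{i-1}(x)$ rather than at $x$ itself. Writing $Q_0:=\Id$ and $Q_i:=T_i\cdots T_1$ as in Lemma~\ref{l:l10}, the ultimate target is an inequality of the shape
\[ \dist(x,\Fix T)^{\frac{2}{\gamma\theta}} \leq C\sum_{j=1}^n\|Q_{j-1}(x)-Q_j(x)\|^2, \]
after which Lemma~\ref{l:l10} and a division by $\dist(x,\Fix T)$ will yield H\"older regularity of $T$ on $U$.

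As a preliminary, I would first verify that $U$ is forward-invariant under every $Q_i$: since $z\in\cap_j\Fix T_j\subseteq\Fix T_i$ and $T_i$ is SQNE (hence quasi-nonexpansive with respect to $z$), $\|T_i(y)-z\|\leq\|y-z\|$, so by induction $Q_i(x)\in U$ whenever $x\in U$. This is precisely what allows hypothesis~(i) to be applied at $Q_{i-1}(x)$, yielding $\dist(Q_{i-1}(x),\Fix T_i)\leq\kappa_i\|Q_{i-1}(x)-Q_i(x)\|^{\gamma_i}$. I would then homogenize exponents by setting $\gamma:=\min_i\gamma_i\in(0,1)$ and invoking Lemma~\ref{l:hoelder trick} on the (bounded on $U$) quantities $\|Q_{i-1}(x)-Q_i(x)\|$, producing a single pair $(\kappa,\gamma)$ that works uniformly in $i$.

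The main new ingredient is the transport from $Q_{i-1}(x)$ back to $x$. By the triangle inequality,
\[ \dist(x,\Fix T_i) \leq \|x-Q_{i-1}(x)\| + \dist(Q_{i-1}(x),\Fix T_i) \leq \sum_{j=1}^{i-1}\|Q_{j-1}(x)-Q_j(x)\| + \kappa\|Q_{i-1}(x)-Q_i(x)\|^{\gamma}, \]
and a second application of Lemma~\ref{l:hoelder trick} upgrades each linear term on the right into a multiple of $\|Q_{j-1}(x)-Q_j(x)\|^\gamma$. Taking the maximum over $i$ then gives
\[ \max_{i=1,\dots,n}\dist(x,\Fix T_i) \leq C_1\sum_{j=1}^n\|Q_{j-1}(x)-Q_j(x)\|^{\gamma}. \]

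To conclude, I would apply hypothesis~(ii) to obtain $\tau>0$ and $\theta\in(0,1)$ with $\dist(x,\Fix T)\leq\tau\max_i\dist(x,\Fix T_i)^\theta$, raise this estimate to the power $2/\gamma$, and use the convexity bound $\bigl(\sum_j a_j\bigr)^{2/\gamma}\leq n^{2/\gamma-1}\sum_j a_j^{2/\gamma}$ with $a_j=\|Q_{j-1}(x)-Q_j(x)\|^\gamma$ to convert the right-hand side into $\sum_j\|Q_{j-1}(x)-Q_j(x)\|^2$. Lemma~\ref{l:l10} then bounds this sum by $(2/\min_i\rho_i)\dist(x,\Fix T)\|x-T(x)\|$, and dividing through by $\dist(x,\Fix T)$ (the case $\dist(x,\Fix T)=0$ being trivial by quasi-nonexpansivity of $T$) produces $\dist(x,\Fix T)^{\frac{2}{\gamma\theta}-1}\leq C_2\|x-T(x)\|$; since $\frac{2}{\gamma\theta}-1>1$, this is exactly H\"older regularity of $T$ on $U$ with some exponent in $(0,1)$. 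The hardest part will be the transport estimate: one must use boundedness of $U$ twice, once to guarantee $Q_i(x)\in U$ so that hypothesis~(i) can be applied, and once to balance the different powers arising from the chain $x\to Q_1(x)\to\cdots\to Q_{i-1}(x)$ without degrading the final exponent or inflating the constants in a way that obstructs the terminating division by $\dist(x,\Fix T)$.
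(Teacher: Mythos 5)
Your argument is correct and follows essentially the same route as the paper's proof: forward-invariance of $U$ under the partial compositions $Q_i$, Hölder regularity of each $T_i$ applied at $Q_{i-1}(x)$ with a triangle-inequality transport back to $x$, exponent homogenisation via Lemma~\ref{l:hoelder trick}, convexity of $t\mapsto t^{2/\gamma}$, Lemma~\ref{l:l10}, and finally the Hölder regularity of the collection $\{\Fix T_i\}_{i=1}^n$. The only slip is a minor bookkeeping one: the collection estimate should be raised to the power $2/(\gamma\theta)$ rather than $2/\gamma$ in order to reach your stated target $\dist^{2/(\gamma\theta)}(x,\Fix T)\leq C\sum_{j=1}^n\|Q_{j-1}(x)-Q_j(x)\|^2$, which is exactly the inequality the paper derives before dividing by $\dist(x,\Fix T)$.
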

\begin{proof}
Denote $Q_0=\Id$ and $Q_i=T_i\dots T_2T_1$ for all $i\in\{1,\dots,n\}$.
Since $T_i$ is H\"older regular on $U$, there exists constants $\kappa_i>0$ and $\gamma_i\in(0,1)$ such that
\begin{equation*}
  \dist(x,\Fix T_i) \leq \kappa_i\|x-T_i(x)\|^{\gamma_i}\quad\forall x\in U. 
\end{equation*}
Denote $\gamma=\min_{i=1,\dots,n}\gamma_i\in(0,1)$. By using the same argument as in Theorem~\ref{th:hoelder reg comb}, we deduce the existence of $\kappa>0$ such that, for all $i\in\{1,\dots,n\}$, we have 
\begin{equation}\label{eq:Ti hoelder}
   \dist(x,\Fix T_i) \leq \kappa\|x-T_i(x)\|^{\gamma}\quad\forall x\in U. 
\end{equation}
Let $x\in U$. Then, since $T_i$ is $\rho_i$-SQNE for all $i\in\{1,\dots,n\}$, we have that 
\begin{align*}
   R^2 \geq \|x-z\|^2 
   &\geq \|Q_1(x)-z\|^2 + \rho_1\|Q_0(x)-Q_1(x)\|^2 \\
   &\geq \|Q_2(x)-z\|^2 + \rho_2\|Q_1(x)-Q_2(x)\|^2 + \rho_1\|Q_0(x)-Q_1(x)\|^2 \\
   &\;\,\vdots \\
   &\geq \|T(x)-z\|^2 + \sum_{i=1}^n\rho_i\|Q_{i-1}(x)-Q_i(x)\|^2.
\end{align*}   
From this, it follows that $Q_i(x)\in U$ for all $i\in\{1,\dots,n\}$ and that 
 $$\max_{i=1,\dots,n}\|Q_{i-1}(x)-Q_i(x)\|\in\left[0,\frac{R}{\sqrt{\rho}}\right]\text{~~where~~}\rho=\min_{i=1,\dots,n}\rho_i. $$
By Lemma~\ref{l:hoelder trick}, there exists a constant $\mu>0$ such that, for all $i\in\{1,\dots,n\}$, we have
\begin{equation}\label{eq:hoelder asym}
  \|Q_{i-1}(x)-Q_i(x)\| \leq  \mu\|Q_{i-1}(x)-Q_i(x)\|^\gamma\text{~~for all~~}x\in U. 
\end{equation}
Set $M:=\max\{\mu,\kappa\}$ and $j\in\{1,\dots,n\}$. Applying the triangle inequality, followed by \eqref{eq:Ti hoelder} and \eqref{eq:hoelder asym} gives
\begin{align*}
\dist(x,\Fix T_j) 
 &\leq \|x-P_{\Fix T_j}(Q_{j-1}(x))\| \\
 &\leq \|x-Q_1(x)\| + \|Q_1(x)-Q_2(x)\| + \dots + \|Q_{j-1}(x)-P_{\Fix T_j}(Q_{j-1}(x))\| \\
 &\leq \|x-Q_1(x)\| + \|Q_1(x)-Q_2(x)\| + \dots + \kappa\|Q_{j-1}(x)-Q_j(x)\|^{\gamma_j} \\
 &\leq M\sum_{i=1}^n\|Q_{i-1}(x)-Q_i(x)\|^{\gamma}.
\end{align*}
Set $\rho:=\min_{i=1,\dots,n}\rho_i$. Using convexity of $t\mapsto t^{2/\gamma}$ followed by Lemma~\ref{l:l10}, we  deduce
\begin{equation}\label{eq:key hoelder}
 \dist^{2/\gamma}(x,\Fix T_j) \leq n^{(\gamma/2-1)}M^{2/\gamma}\sum_{i=1}^n\|Q_{i-1}(x)-Q_i(x)\|^2 \leq \frac{2n^{(\gamma/2-1)}M^{2/\gamma}}{\rho}\dist(x,\Fix T)\|x-T(x)\|. 
\end{equation}
Thus, using the fact that the collection $\{\Fix T_j\}_{j=1}^n$ is H\"older regular on $U$ together with \eqref{eq:key hoelder}, we deduce the existence of a $\tau>0$ and a $\theta\in(0,1)$ such that
  $$ \dist^{\frac{2}{\gamma\theta}}(x,\Fix T) \leq \tau^{\frac{2}{\gamma\theta}}\max_{j=1,\dots,n}\dist^{2/\gamma}(x,\Fix T_j) \leq \frac{2n^{(\gamma/2-1)}M^{2/\gamma}\tau^{\frac{2}{\gamma\theta}}}{\rho}\dist(x,\Fix T)\|x-T(x)\|. $$
The result then follows on observing that $\frac{\gamma\theta}{2-\gamma\theta}<1$ as $\gamma,\theta\in(0,1)$.
\end{proof}

\begin{corollary}\label{cor:hoelder reg comp}
Let $T_i\colon\Hilbert\to\Hilbert$ be $\rho_i$-SQNE for all $i\in\{1,\dots,n\}$. Assume $\cap_{i=1}^n\Fix T_i\neq\emptyset$. Suppose the following assertions hold.
\begin{enumerate}[(i)]
\item The operator $T_i$ is boundedly H\"older regular for all $i\in\{1,\dots,n\}$.
\item The collection $\{\Fix T_i\}_{i=1}^n$ is boundedly H\"older regular.
\end{enumerate}
Then $T:=T_n\dots T_2T_1$ is boundedly H\"older regular.
\end{corollary}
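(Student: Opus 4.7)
The plan is to mirror exactly the argument used for Corollary~\ref{cor:linear reg comp}, simply replacing the role of Theorem~\ref{th:linear reg comp} with its H\"older analogue Theorem~\ref{th:hoelder reg comp}. The corollary is essentially a statement that the ball-indexed notion of H\"older regularity (on $\mathbb{B}(z,R)$ with $z\in\Fix T$) and the bounded notion coincide, so the heavy lifting has already been carried out in the preceding theorem.

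First I would fix an arbitrary point $z\in\cap_{i=1}^n\Fix T_i$, which exists since this intersection is assumed nonempty, and let $R>0$ be arbitrary. Setting $U:=\mathbb{B}(z,R)$, the two hypotheses of Theorem~\ref{th:hoelder reg comp} are immediate from the assumptions: each $T_i$ is boundedly H\"older regular, hence H\"older regular on the bounded set $U$, and similarly the collection $\{\Fix T_i\}_{i=1}^n$ is H\"older regular on $U$. Applying Theorem~\ref{th:hoelder reg comp} with this choice of $z$ and $R$ yields that $T=T_n\dots T_2 T_1$ is H\"older regular on $\mathbb{B}(z,R)$.

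Since this holds for every $R>0$ (with the chosen $z\in\Fix T$ fixed throughout), Remark~\ref{re:bounded reg equiv} allows us to conclude that $T$ is boundedly H\"older regular, establishing the claim.

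I do not anticipate any real obstacle: the argument is a direct bookkeeping step on top of Theorem~\ref{th:hoelder reg comp}, exactly parallel to Corollary~\ref{cor:linear reg comp}. The only subtle point worth keeping in mind is that the H\"older exponent $\gamma$ and constant $\kappa$ produced by Theorem~\ref{th:hoelder reg comp} are allowed to depend on $R$; this is harmless because the definition of bounded H\"older regularity only requires H\"older regularity on each bounded set with possibly set-dependent constants, and this is precisely what Remark~\ref{re:bounded reg equiv} packages for us.
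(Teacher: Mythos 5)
Your proposal is correct and is essentially the paper's own argument: the paper proves the corollary exactly by combining Theorem~\ref{th:hoelder reg comp} with Remark~\ref{re:bounded reg equiv}, just as you do (with the identity $\Fix T=\cap_{i=1}^n\Fix T_i$ for SQNE compositions supplied by the earlier proposition, implicit in both treatments).
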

\begin{proof}
Follows by combining Theorem~\ref{th:hoelder reg comp} with Remark~\ref{re:bounded reg equiv}.
\end{proof}

\section{Convergence Rates for Combinations and Compositions}\label{s:convergence2}
In this section, we further refine the results from Section~\ref{s:convergence}. More precisely, we consider the dynamical system \eqref{eq:system} in the setting when the operator $T$ can be expressed in terms of a convex combination or a composition of operators $T_i\colon\Hilbert\to\Hilbert$ for $i\in\{1,\dots,n\}$ with $\cap_{i=1}^n\Fix T_i\neq\emptyset$. In other words, we consider the system
  \begin{equation}\label{eq:system2}
   \dot{x}(t) = \lambda(t)\bigl(T\bigl(x(t)\bigr)-x(t)\bigr), 
  \end{equation}
where $T$ is given by ether:  
\begin{enumerate}[(i)]
\item $T=\sum_{i=1}^n\omega_iT_i$ with $\sum_{i=1}\omega_i=1$ and $\omega_i>0$ for all $i\in\{1,\dots,n\}$, or
\item $T=T_n\dots T_2T_1$. 
\end{enumerate}
Situations of this kind naturally arise in the study of continuous-time \emph{projection algorithms} for solving the \emph{feasibility problem}. This problem asks for a point in the intersection of closed, convex constraints $C_1,\dots,C_n$. In the simplest such algorithm, the \emph{method of cyclic projections}, $T_i=P_{C_i}$ where $P_{C}$ denotes the \emph{nearest point projector} onto a set $C$ given by
  $$ P_{C}(x) = \{c\in C:\|x-c\|\leq\|x-z\|\,\forall z\in C\}, $$
and $T=P_{C_n}\dots P_{C_2}P_{C_1}$ is the \emph{cyclic projections} operator. Another example is provided by \emph{Douglas--Rachford} methods in which each operators $T_i$ is  a \emph{Douglas--Rachford} operator of the form
  $$ \frac{\Id + (2P_{C_j}-\Id)(2P_{C_l}-\Id)}{2} =  \Id + P_{C_j}(2P_{C_l}-\Id) - P_{C_l} $$
for a pair indices $j,l\in\{1,\dots,n\}$. For further details on projection algorithms (with $\Hilbert$ potentially infinite dimensional) in linearly regular settings, see \cite{BNP2015,CRZ2018}, and in H\"older regular settings, see \cite{BLT2017}.

We obtain the results in this section by combining the results from the previous two sections. To do so, we require the following class of operators which are both nonexpansive and strongly quasinonexpansive.

\begin{definition}[Averaged nonexpansive {\cite{BBR}}]
An operator $T\colon\Hilbert\to\Hilbert$ is \emph{$\alpha$-averaged nonexpansive} if $\alpha\in(0,1)$ such that one of the following two equivalent properties holds.
\begin{enumerate}[(i)]
\item There exists a nonexpansive operator $R\colon\Hilbert\to\Hilbert$ such that
  $ T = (1-\alpha)\Id+\alpha R. $
\item For all $x,y\in\Hilbert$, we have
  $$ \|T(x)-T(y)\|^2 + \frac{1-\alpha}{\alpha}\|(\Id-T)(x)-(\Id-T)(y)\|^2 \leq \|x-y\|^2. $$
\end{enumerate} 
\end{definition}

Note that it is immediate from the respective definitions that an $\alpha$-averaged operator is $\rho$-SQNE with $\rho=(1-\alpha)/\alpha$.

\begin{corollary}
Let $T_i\colon\Hilbert\to\Hilbert$ be $\alpha_i$-averaged nonexpansive with $\cap_{i=1}^n\Fix T_i\neq\emptyset$. Suppose $\lambda\colon[0,+\infty)\to[0,1]$ is Lebesgue measurable with $\inf_{t\geq 0}\lambda(t)\bigl(1-\lambda(t)\bigr)>0$. Let $x$ be the unique strong global solution of \eqref{eq:system2}. Further, suppose that the following assertions hold.
\begin{enumerate}[(i)]
\item The operator $T_i$ is boundedly linearly regular for $i\in\{1,\dots,n\}$.
\item The collection $\{\Fix T_i\}_{i=1}^n$ is boundedly linearly regular.
\end{enumerate}
Then there exists $\bar{x}\in\cap_{i=1}^n\Fix T_i$ and constants $M,r>0$ such that, for almost all $t\in[0,+\infty)$, we have
$$ \|x(t)-\bar{x}\| \leq M\exp(-rt). $$
In particular, the trajectory $x(t)$ converges strongly to $\bar{x}$ as $t\to+\infty$. 
\end{corollary}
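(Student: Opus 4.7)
The plan is to assemble this corollary by feeding the closure results of Section~\ref{s:closure properties} into the exponential stability result Theorem~\ref{th:linear reg convergence}. First, I would note that every $\alpha_i$-averaged nonexpansive $T_i$ is automatically nonexpansive and $\rho_i$-strongly quasinonexpansive with $\rho_i=(1-\alpha_i)/\alpha_i$, as observed immediately after the definition of averaged operators. Hence, whether $T$ is the convex combination $\sum_{i=1}^n\omega_iT_i$ or the composition $T_n\cdots T_1$, the operator $T$ is itself nonexpansive (both operations preserve nonexpansivity by the triangle inequality / Jensen), and $\Fix T=\cap_{i=1}^n\Fix T_i$ by the proposition quoted from \cite[Theorem~2.1.26]{Ceg}. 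In particular $\Fix T$ is nonempty, so the hypothesis of Theorem~\ref{th:linear reg convergence} concerning $T$ itself is in place.

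Second, I would upgrade the individual regularity hypotheses to bounded linear regularity of the aggregated operator $T$. In the convex-combination case this is exactly Theorem~\ref{th:linear reg comb}; in the composition case it is Corollary~\ref{cor:linear reg comp}. Both results take as input precisely the two standing assumptions of the corollary: bounded linear regularity of each $T_i$ and bounded linear regularity of $\{\Fix T_i\}_{i=1}^n$. Consequently $T$ is boundedly linearly regular with some constant $\kappa>0$. Third, I would verify the $\lambda$ condition: since $\lambda(t)\in[0,1]$, the assumption $\inf_{t\geq 0}\lambda(t)(1-\lambda(t))>0$ forces $\lambda^*:=\inf_{t\geq 0}\lambda(t)>0$, which is exactly the hypothesis required by Theorem~\ref{th:linear reg convergence}.

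With all three ingredients in hand, a direct application of Theorem~\ref{th:linear reg convergence} to the system \eqref{eq:system2} produces $\bar{x}\in\Fix T=\cap_{i=1}^n\Fix T_i$ with
$$\|x(t)-\bar{x}\|\leq 2\exp\!\left(-\tfrac{\lambda^*}{2\kappa^2}t\right)\dist(x_0,\Fix T),$$
so setting $M:=2\dist(x_0,\Fix T)$ and $r:=\lambda^*/(2\kappa^2)$ yields the announced bound, and strong convergence of $x(t)$ to $\bar{x}$ follows from $\exp(-rt)\to 0$. I expect no real obstacle here: the corollary is essentially a packaging statement that the preceding two sections were designed to deliver. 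The only bookkeeping point worth double-checking is that the bounded linear regularity results in Section~\ref{s:closure properties} are stated for SQNE operators, which is why the preliminary identification of averaged operators as SQNE is logically necessary rather than cosmetic.
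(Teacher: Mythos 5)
Your proposal is correct and follows essentially the same route as the paper's own proof: invoke Theorem~\ref{th:linear reg comb} (combination case) or Corollary~\ref{cor:linear reg comp} (composition case) to conclude that $T$ is boundedly linearly regular, then apply Theorem~\ref{th:linear reg convergence} to the system \eqref{eq:system2}. The additional bookkeeping you supply---nonexpansivity of $T$, the identification $\Fix T=\cap_{i=1}^n\Fix T_i$, the SQNE property of averaged operators, and the observation that $\inf_{t\geq 0}\lambda(t)(1-\lambda(t))>0$ forces $\lambda^\ast>0$---is exactly what the paper leaves implicit, so there is nothing to correct.
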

\begin{proof}
By either Theorem~\ref{th:linear reg comb} or Corollary~\ref{cor:linear reg comp}, the operator $T$ is boundedly H\"older regular. The result then follows by Theorem~\ref{th:linear reg convergence}.
\end{proof}

\begin{corollary}
Let $T_i\colon\Hilbert\to\Hilbert$ be $\alpha_i$-averaged nonexpansive with $\cap_{i=1}^n\Fix T_i\neq\emptyset$. Suppose $\lambda\colon[0,+\infty)\to[0,1]$ is Lebesgue measurable with $\inf_{t\geq 0}\lambda(t)\bigl(1-\lambda(t)\bigr)>0$. Let $x$ be the unique strong global solution of \eqref{eq:system2}. Further, suppose that the following assertions hold.
\begin{enumerate}[(i)]
\item The operator $T_i$ is boundedly H\"older regular for all $i\in\{1,\dots,n\}$.
\item The collection $\{\Fix T_i\}_{i=1}^n$ is boundedly H\"older regular.
\end{enumerate}
Then there exists $\bar{x}\in\cap_{i=1}^n\Fix T_i$, $M>0$ and $\gamma\in(0,1)$ such that, for almost all $t\in[0,+\infty)$, we have
$$ \|x(t)-\bar{x}\| \leq M\,t^{-\frac{\gamma}{2(1-\gamma)}}. $$
In particular, the trajectory $x(t)$ converges strongly to $\bar{x}$ as $t\to+\infty$. 
\end{corollary}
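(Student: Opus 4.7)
The plan is to prove this corollary by a direct reduction: combine the appropriate closure property from Section~\ref{s:closure properties} to establish bounded H\"older regularity of $T$, then invoke Theorem~\ref{th:hoelder reg convergence} to obtain the rate. This mirrors exactly the strategy used in the preceding corollary (the linearly regular version), so I will follow that template.

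First I would observe that any $\alpha_i$-averaged nonexpansive operator is $\rho_i$-SQNE with $\rho_i=(1-\alpha_i)/\alpha_i>0$, as noted in the remark just before the corollary. This brings each $T_i$ into the setting of Section~\ref{s:closure properties}. Since the $T_i$ share a common fixed point, the proposition quoted from \cite[Theorem~2.1.26]{Ceg} yields $\Fix T=\cap_{i=1}^n\Fix T_i\neq\emptyset$ in both the convex combination and composition cases. Next, depending on which form of $T$ is under consideration in \eqref{eq:system2}, I would apply Theorem~\ref{th:hoelder reg comb} (for $T=\sum_{i=1}^n\omega_iT_i$) or Corollary~\ref{cor:hoelder reg comp} (for $T=T_n\dots T_2T_1$) to conclude that $T$ itself is boundedly H\"older regular. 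Hypotheses (i) and (ii) are precisely what these results require.

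At this point the operator $T$ is nonexpansive (as a convex combination or composition of averaged nonexpansive operators is again averaged, hence nonexpansive), has nonempty fixed point set, and is boundedly H\"older regular. Since $\inf_{t\geq 0}\lambda(t)(1-\lambda(t))>0$ forces $\inf_{t\geq 0}\lambda(t)>0$, the hypotheses of Theorem~\ref{th:hoelder reg convergence} are satisfied, and applying it directly to the unique strong global solution $x$ of \eqref{eq:system2} produces $\bar{x}\in\Fix T=\cap_{i=1}^n\Fix T_i$, $M>0$ and $\gamma\in(0,1)$ with
$$\|x(t)-\bar{x}\|\leq M\,t^{-\frac{\gamma}{2(1-\gamma)}}\quad\text{for a.a.\ }t\in[0,+\infty),$$
which in particular gives strong convergence of $x(t)$ to $\bar{x}$ as $t\to+\infty$.

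There is no real obstacle here: the statement is a direct synthesis of the two strands developed earlier in the paper. The only mild subtlety worth mentioning is verifying that the averaging assumption is exactly what licenses the switch between nonexpansive (as required by Theorem~\ref{th:hoelder reg convergence}) and strongly quasinonexpansive (as required by the closure results of Section~\ref{s:closure properties}); once this bridge is acknowledged, the proof is essentially a one-line composition of previous theorems.
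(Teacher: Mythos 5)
Your proposal is correct and follows exactly the paper's own proof: apply Theorem~\ref{th:hoelder reg comb} (convex combination case) or Corollary~\ref{cor:hoelder reg comp} (composition case) to conclude that $T$ is boundedly H\"older regular, and then invoke Theorem~\ref{th:hoelder reg convergence}. The extra details you supply --- that $\alpha_i$-averaged operators are $\rho_i$-SQNE, that $\Fix T=\cap_{i=1}^n\Fix T_i$, that $T$ remains nonexpansive, and that $\inf_{t\geq 0}\lambda(t)\bigl(1-\lambda(t)\bigr)>0$ implies $\inf_{t\geq 0}\lambda(t)>0$ --- are all accurate and simply make explicit what the paper leaves implicit.
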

\begin{proof}
By either Theorem~\ref{th:hoelder reg comb} or Corollary~\ref{cor:hoelder reg comp}, the operator $T$ is boundedly H\"older regular. The result then follows by Theorem~\ref{th:hoelder reg convergence}.
\end{proof}

\paragraph*{Acknowledgements}
The first author is supported by FWF (Austrian Science Fund) project P 29809-N32.
The second author is supported in part by ARC grant DP200101197.
The third author is supported in part by ARC grant DE200100063.
The authors would like to thank the anonymous referees for their helpful comments, which included an improvement to Theorem~\ref{th:linear reg convergence}.

\end{document}